\newtheorem{thm}{Theorem}
\newtheorem{cor}[thm]{Corollary}
\newtheorem{prop}[thm]{Proposition}
\newtheorem{defn}{Definition}
\newtheorem*{rem}{Remark}
\newtheorem*{examp}{Example}
\begin{document}
\title{Singularities of the Moduli Space of $n$ Unordered Points on the Riemann Sphere}
\author{Yue Wu and Bin Xu$^\dagger$}
\date{}

\maketitle

\begin{abstract}
We classify the finite groups associated to the singularities of the moduli space of $n\geq 5$ unordered points on the Riemann sphere. We also realize the classification  by an algorithm.
\end{abstract}
\footnote{
$^\dagger$
The second author is supported in part by the National Natural Science Foundation of China (Grant No. 11571330) and the Fundamental Research Funds for the Central
Universities.}

\tableofcontents
\newpage
\section{Introduction}

Let $\mathfrak{M_{g,\:n}}$ denote the moduli space of isomorphism classes of compact Riemann surfaces of genus $g$ with $n$ unordered marked points.
It is well-known that $\mathfrak{M_{g,\:n}}$ is both a complex orbifold and an irreducible quasi-projective variety of dimension $3g-3+n$, where $n\ge3$ if $g=0$, $n\ge1$ if $g=1$ and $n\ge0$ if $g\ge2$ \cite{De-Mu}.

In section \ref{orbifold}, using elementary method we show that $\mathfrak{M_{0,\:n}}$ is a complex orbifold of dimension $n-3$ when $n\ge4$.
Moreover, we show that when $n\ge5$, $\mathfrak{M_{0,\:n}}$ is the quotient space of an $S_n$ action upon a particular Zariski open set $K_n \subseteq\mathbb{C}^{n-3}$,
and the stabilizers of this $S_n$ action correspond to all the subgroups of
\[{\rm PSL}(2,\,{\Bbb C})=\{\text{M\" obius transformations}\}\]
which are stabilizers of sets that consist of $n$ distinct points on the extended complex plane ${\widehat {\Bbb C}}={\Bbb C}\cup\{\infty\}$ (for a set $\alpha$ consists of $n$ distinct points on the extended complex plane, its stabilizer ${\mathcal A}_\alpha$ refers to the group consists of all the M\" obius transformations which leave $\alpha$ invariant). 
Thus the classification of the orbifold singularities of $\mathfrak{M_{0,\:n}}$ when $n\ge5$ is reduced to the following problem:
\begin{quote}
{\it Classify all the stabilizers of sets that consist of $n$ distinct points on the extended complex plane ${\widehat {\Bbb C}}$.}
\end{quote}
We note that such a stabilizer is a finite subgroup of ${\rm PSL}(2,\,{\Bbb C})$ as $n\geq5$. We also mention the following references for the widely known classical cases of $n=4,\,5$ and $6$ about the stabilizers: \cite{Wiman} and \cite[Chapter 8 ]{Dol} for the cases of $n=4$ and $5$, and
\cite{Bolza} for the case of $n=6$.

It is well known that there are three types of non-trivial finite subgroups of ${\rm PSL}(2,\,{\Bbb C})$ up to conjugacy: the polyhedral groups, the dihedral groups and the cyclic groups. Let $G$ be one of these groups. Then $\alpha$ is decomposed into finitely many orbits of $G$ if ${\mathcal A}_\alpha$ coincides with $G$. However, we note that the converse of this assertion is {\it not} true in general. We thus introduce a notion called the \emph{component index} of $\alpha$ to describe the number of orbit(s) in each type in this decomposition and its definition is left to Subsection \ref{subsec:subsets}.

To tackle our problem, first we give equivalent conditions concerning the constitution of $\alpha$ for its stabilizer ${\mathcal A}_\alpha$ to coincide with $G$. Second we discuss the range of possible values for both the component index and the cardinality of $\alpha$ when ${\mathcal A}_\alpha$ coincides with $G$. Last we summarize these results to a complete answer to the problem and the classification of the orbifold singularities of $\mathfrak{M_{0,\:n}}$ when $n\geq 5$. We express the answer by an algorithm written in pseudo-code in Section \ref{algorithm} and C programming language in Section \ref{code}. We also mention an interesting observation as follows:
\begin{quote}
{\it For each finite subgroup $G$ of $\text{\rm PSL}(2,\:\mathbb C)$, there exists an integer $n\geq 5$ and some orbifold singularity of $\mathfrak{M_{0,\:n}}$ with stabilizer $G$.}
\end{quote}

The organization of this manuscript is as follows. Section \ref{main} is the main body of this paper, presenting all the major steps in tackling our problem but leaving out some proof details to the following sections.
Section \ref{orbifold} discusses the correspondence between the orbifold stabilizers of $\mathfrak{M_{0,\:n}}$ and the finite subgroups of ${\rm PSL}(2,\,{\Bbb C})$ which are stabilizers for sets that consist of $n$ distinct points. It also gives an explicit representation of these stabilizers. 
Section \ref{cardinality} discusses the range of possible values for both the component index and the cardinality of $\alpha$ by giving concrete examples for the cases when ${\mathcal A}_\alpha$ is dihedral, cyclic and trivial. The results are listed in Subsection \ref{subsec:subsets} and \ref{algorithm}.
Section \ref{code} is the code for classification written in C programming language.

\section{Main Results}\label{main}

Subsection \ref{subsec: orbifold} shows the correspondence between the orbifold stabilizers of $\mathfrak{M_{0,\:n}}$ and the finite subgroups of ${\rm PSL}(2,\,{\Bbb C})$ which are stabilizers for sets consisting of $n$ distinct points on the extended complex plane (proof is left to Section \ref{orbifold}). Subsection \ref{subsec:subsets} discusses the range of possible values for both the component index and the cardinality of $\alpha$ (some proofs are left to Section \ref{cardinality}). Subsection \ref{algorithm} presents the final answer to our problem: the classification algorithm of the orbifold singularities of $\mathfrak{M_{0,\:n}}$ when $n\geq 5$.

\subsection{Singularities of ${\frak {M_{0,n}}}$ and Stabilizers of Sets of $n$ Points on $\widehat {\Bbb C}$} \label{subsec: orbifold}

Let $C$ and $C'$ be two compact Riemann surfaces of genus $g$ and 
$
\{p_1, p_2, \cdots p_n\}\subseteq C
$, $
\{p'_1, p'_2, \cdots p'_n\}\subseteq C'.
$
We say that $(C, \{p_1, p_2, \cdots p_n\})$ and $(C', \{p'_1, p'_2, \cdots p'_n\})$ are isomorphic if there exists some biholomorphic map $f: C\to C'$ such that
$
f(\{p_1, p_2, \cdots p_n\})=\{p'_1, p'_2, \cdots p'_n\}.
$
\begin{defn}[\cite{Mu-Pe}]
The moduli space $\mathfrak{M_{g,\:n}}$ is the set of isomorphism classes of compact Riemann surfaces of genus $g$ with $n$ unordered marked points.
\end{defn}
In Section \ref{orbif} using elementary methods we shall prove that
\begin{thm}
$\mathfrak{M_{0,\:n}}=K_n/G_n$ for $n\ge4$, where $K_n$ is the Zariski open set
$$
K_n=\{\boldsymbol{\lambda}
=(\lambda_1, \lambda_2, \cdots \lambda_{n-3})\in\mathbb C^{n-3}
\:|\:\lambda_i\ne0, 1,\:\lambda_i\ne\lambda_j,\:
\forall i, j=1, 2, \cdots n-3,\:i\ne j\}
$$
and $G_n$ is a finite group of birational transformations of $\mathbb C^{n-3}$, whose restrictions to $K_n$ are isomorphisms on $K_n$.
\end{thm}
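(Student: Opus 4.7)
The plan is to exploit the sharply $3$-transitive action of ${\rm PSL}(2,\mathbb{C})$ on $\widehat{\mathbb{C}}$ in order to cut out a normalized slice for ${\rm PSL}(2,\mathbb{C})$-orbits on ordered configurations (namely $K_n$), and then to transfer the permutation action of $S_n$ to this slice.

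First, for an ordered $n$-tuple $(p_1,\ldots,p_n)$ of distinct points on $\widehat{\mathbb{C}}$, let $\phi$ be the unique Möbius transformation sending $(p_1,p_2,p_3)$ to $(0,1,\infty)$, and set $\lambda_i:=\phi(p_{i+3})$ for $i=1,\ldots,n-3$. The defining inequalities of $K_n$ encode exactly the distinctness of the $\lambda_i$ from each other and from $0,1,\infty$, and two ordered tuples are ${\rm PSL}(2,\mathbb{C})$-equivalent if and only if they produce the same $\boldsymbol{\lambda}\in K_n$. This identifies the ordered configuration space of $n$ points on $\widehat{\mathbb{C}}$ modulo ${\rm PSL}(2,\mathbb{C})$ with $K_n$.

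Second, to pass from ordered to unordered marked points, I would transfer the natural $S_n$-action on the ordered configuration space to $K_n$. Given $\sigma\in S_n$ and $\boldsymbol{\lambda}\in K_n$, identify $\boldsymbol{\lambda}$ with the canonical representative tuple $(0,1,\infty,\lambda_1,\ldots,\lambda_{n-3})$, apply $\sigma$, and re-normalize by the unique $\tau_\sigma\in{\rm PSL}(2,\mathbb{C})$ sending the new first three entries back to $(0,1,\infty)$; the image of the remaining entries defines $\sigma\cdot\boldsymbol{\lambda}$. Let $G_n$ be the group of self-transformations of $K_n$ produced this way, which is a quotient of $S_n$. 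The equality $\mathfrak{M_{0,\:n}}=K_n/G_n$ is then immediate, since two ordered tuples yield the same unordered marked Riemann sphere if and only if they differ by a permutation followed by a Möbius transformation.

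Third, each coordinate of $\sigma\cdot\boldsymbol{\lambda}$ equals the value of $\tau_\sigma$ at some entry of $\{0,1,\infty,\lambda_1,\ldots,\lambda_{n-3}\}$, and the coefficients of $\tau_\sigma$ depend rationally on any three such entries; hence $\sigma$ extends to a birational self-map of $\mathbb{C}^{n-3}$ whose birational inverse is furnished by $\sigma^{-1}$. Finiteness of $G_n$ is then immediate as it is a quotient of the finite group $S_n$. The main anticipated difficulty is the last clause, namely verifying that the restriction of each such birational map to $K_n$ is a genuine biregular automorphism: one must inspect the explicit rational formulas for a generating set of $S_n$ (say, the adjacent transpositions) and confirm that the denominators do not vanish on $K_n$. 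Since the only possible poles of $\tau_\sigma$ arise from coincidences among the permuted first three entries---which are forbidden on $K_n\subseteq{\rm Conf}_n(\widehat{\mathbb{C}})$---this reduces to a finite case-by-case check on generators.
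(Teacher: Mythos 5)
Your proposal is correct and follows essentially the same route as the paper: your renormalizing transformation $\tau_\sigma$ is exactly the paper's $f^{\boldsymbol{\lambda}}_\sigma$, the induced action is the paper's $g_\sigma$, and the identification of isomorphism classes with $G_n$-orbits is argued identically. The only step you gloss over is the verification that $\sigma\mapsto g_\sigma$ is actually a homomorphism (i.e.\ $g_\pi\circ g_\sigma=g_{\pi\cdot\sigma}$), which underlies your claim that $G_n$ is a quotient of $S_n$; the paper checks this explicitly via $f^{g_\sigma(\boldsymbol{\lambda})}_\pi\circ f^{\boldsymbol{\lambda}}_\sigma=f^{\boldsymbol{\lambda}}_{\pi\cdot\sigma}$, and it is routine.
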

\begin{cor}\label{34}
$\mathfrak{M_{0,\:n}}=K_n/G_n$ is a complex orbifold of dimension $n-3$ when $n\ge4$.
\end{cor}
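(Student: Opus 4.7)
The plan is to deduce the corollary directly from the preceding theorem by noting that $K_n$ is itself a complex manifold of dimension $n-3$ (as a Zariski open subset of $\mathbb{C}^{n-3}$) and that $G_n$ is a finite group acting on it by biholomorphisms, so all that remains is to verify that such a quotient carries a canonical complex orbifold structure of the same dimension.

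First I would fix a point $p \in K_n$ and let $H_p \subseteq G_n$ denote its stabilizer, a finite subgroup. Since $G_n$ is finite, a standard averaging/separation argument produces an open neighborhood $U_p \subseteq K_n$ of $p$ which is $H_p$-invariant and satisfies $gU_p \cap U_p = \emptyset$ for every $g \in G_n \setminus H_p$. In particular, the natural map $U_p/H_p \hookrightarrow K_n/G_n$ is an injection onto an open neighborhood of the image $\bar{p}$ of $p$.

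Next I would upgrade $(U_p, H_p)$ to a genuine orbifold chart by appealing to the Bochner linearization theorem for holomorphic actions of finite groups: after shrinking $U_p$ and choosing suitable holomorphic coordinates centered at $p$, the action of $H_p$ on $U_p$ becomes the restriction of a linear (hence faithful) representation $H_p \hookrightarrow \mathrm{GL}(n-3, \mathbb{C})$. This exhibits a neighborhood of $\bar p$ in $K_n/G_n$ as the quotient of an open subset of $\mathbb{C}^{n-3}$ by a finite group of linear automorphisms, which is exactly the local model of a complex orbifold of dimension $n-3$. Compatibility of overlapping charts follows because the transition between any two lifts is implemented by an element of $G_n$, which acts biholomorphically on $K_n$ by the theorem.

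The main (and really only) obstacle is the verification of the separation/invariance property used to build the charts and the linearization step; both are classical facts about finite group actions on complex manifolds, so the argument is essentially routine once the theorem from which we are deducing the corollary is in hand. Combined with the identification $\mathfrak{M}_{0,n} = K_n / G_n$ from the theorem, this shows $\mathfrak{M}_{0,n}$ is a complex orbifold of dimension $n-3$ for $n \geq 4$.
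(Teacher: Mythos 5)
Your proposal is correct and follows essentially the same route as the paper, which treats the corollary as an immediate consequence of the fact that $K_n$ is a complex manifold of dimension $n-3$ and $G_n$ a finite group acting on it by biholomorphisms. The separation-of-orbits and linearization details you supply are the standard facts the paper leaves implicit.
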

\begin{rem}
$\mathfrak{M_{0,\:n}}$ is a single point if $n\le3$, and $\mathfrak{M_{0,\:4}}$ coincides with the moduli space of elliptic curves.
\end{rem}

In Section \ref{grou} we shall find that
\begin{thm}
$G_n$ is isomorphic to the symmetric group $S_n$ when $n\ge5$.
\end{thm}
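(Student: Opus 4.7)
The plan is to exhibit an explicit homomorphism $\phi\colon S_n\to G_n$ induced by permuting the underlying $n$ marked points, show its orbits on $K_n$ coincide with the fibres of the quotient $K_n\to\mathfrak{M}_{0,n}$, and then conclude bijectivity by establishing injectivity for $n\ge 5$. Following the identification behind the previous theorem, a point $\boldsymbol{\lambda}=(\lambda_1,\dots,\lambda_{n-3})\in K_n$ represents the Möbius class of the ordered tuple $(p_1,\dots,p_n):=(0,1,\infty,\lambda_1,\dots,\lambda_{n-3})$. For $\sigma\in S_n$, let $T_\sigma\in\mathrm{PSL}(2,\mathbb C)$ be the unique Möbius transformation sending $(p_{\sigma^{-1}(1)},p_{\sigma^{-1}(2)},p_{\sigma^{-1}(3)})$ to $(0,1,\infty)$, and define
\[
\sigma\cdot\boldsymbol{\lambda}:=\bigl(T_\sigma(p_{\sigma^{-1}(4)}),\ldots,T_\sigma(p_{\sigma^{-1}(n)})\bigr).
\]
Because the coefficients of $T_\sigma$ are rational in the $\lambda_i$, this yields a group action of $S_n$ on $K_n$ by biholomorphisms that extend to birational automorphisms of $\mathbb C^{n-3}$, so the resulting $\phi\colon S_n\to G_n$ indeed lands in $G_n$.

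Next, two elements of $K_n$ lie in the same $\phi(S_n)$-orbit precisely when their ordered tuples represent the same unordered marked sphere up to a Möbius transformation; but this is the definition of lying over the same point of $\mathfrak{M}_{0,n}=K_n/G_n$. Hence $\phi(S_n)$ and $G_n$ have identical orbits on $K_n$, so at any $\boldsymbol{\lambda}$ with trivial $G_n$-stabilizer the cardinalities $|\phi(S_n)|$ and $|G_n|$ agree. Combined with $\phi(S_n)\subseteq G_n$, it remains only to show that $\phi$ is injective.

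For injectivity when $n\ge 5$, I would choose a configuration $\alpha\subset\widehat{\mathbb C}$ of $n$ points whose stabilizer $\mathcal A_\alpha$ is trivial---for instance, $\{0,1,\infty,2,3\}$ when $n=5$ and suitable generic extensions for larger $n$; the existence of such $\alpha$ is guaranteed because the locus of configurations with nontrivial stabilizer is a proper closed subset of $K_n$ once $n\ge 5$. If $\phi(\sigma)\boldsymbol{\lambda}=\boldsymbol{\lambda}$ for the corresponding $\boldsymbol{\lambda}$, then $T_\sigma$ preserves $\alpha$, hence $T_\sigma=\mathrm{id}$, which forces $p_{\sigma^{-1}(i)}=p_i$ for every $i$ and therefore $\sigma=\mathrm{id}$. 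The main obstacle is precisely this injectivity step: the case $n=4$ shows why the hypothesis $n\ge 5$ is essential, since every four-point set is preserved by a Klein four-group of Möbius transformations, making the analogous $\phi$ factor through $S_4/V_4\cong S_3$; once a single symmetry-free configuration in $K_n$ is produced, the remainder of the argument is formal.
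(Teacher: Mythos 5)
Your strategy is genuinely different from the paper's: the paper establishes $G_n\simeq S_n$ by explicitly computing $g_\sigma$ for a complete set of left coset representatives of $V_n=S_{\{1,2,3\}}\times S_{\{4,\dots,n\}}$ and reading off from the rational formulas that distinct permutations yield distinct maps, whereas you reduce everything to the injectivity of $\sigma\mapsto g_\sigma$ and prove that by exhibiting one point of $K_n$ whose associated $n$-point set has trivial M\"obius stabilizer. That reduction is correct and arguably cleaner: if $g_\sigma$ fixes such a $\boldsymbol\lambda$, then $f^{\boldsymbol\lambda}_\sigma$ preserves $[\boldsymbol\lambda]$, hence is the identity, hence $\sigma=e$. (Your second paragraph about orbits is superfluous here, since the paper defines $G_n=\{g_\sigma\mid\sigma\in S_n\}$, making surjectivity automatic.)

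The gap sits exactly at the step you yourself call the main obstacle, and your witness fails: $\{0,1,\infty,2,3\}$ is preserved by the nontrivial M\"obius involution $z\mapsto 3-z$, which swaps $0\leftrightarrow 3$ and $1\leftrightarrow 2$ and fixes $\infty$, so its stabilizer is not trivial (it is a $\mathbb Z_2$ of the kind classified in Theorem \ref{cZ_n}). The fallback assertion that the locus of configurations with nontrivial stabilizer is a \emph{proper} Zariski-closed subset of $K_n$ for $n\ge 5$ is true, but properness is precisely the nontrivial content and is not free: it requires either a dimension count for configurations invariant under a nontrivial elliptic transformation, or an explicit symmetry-free example. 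The paper produces such examples in Subsection \ref{trivial}: $C_{n-1}(1)\cup\{2\}$, i.e.\ the $(n-1)$-st roots of unity together with the point $2$, has trivial stabilizer for every $n\ge 5$. Substituting one of these witnesses (or supplying the genericity argument in detail) closes the gap; as written, the proof is incomplete.
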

\begin{cor}
The moduli space $\mathfrak{M_{0,\:n}}$ is the quotient space of a Zariski open set $K_n$ of $\mathbb C^{n-3}$ by an $S_n$ action  when $n\ge5$.
\end{cor}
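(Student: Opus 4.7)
The statement to prove is essentially a direct corollary of the two theorems stated just above it, so my plan is to build it out of those two facts rather than open any new machinery. My plan is to observe that the earlier theorem already gives $\mathfrak{M_{0,\:n}} = K_n/G_n$ as topological spaces (in fact as complex orbifolds, by Corollary \ref{34}) for all $n \geq 4$, where $K_n \subseteq \mathbb{C}^{n-3}$ is the explicit Zariski open set defined by $\lambda_i \neq 0, 1$ and $\lambda_i \neq \lambda_j$, and where $G_n$ acts on $K_n$ by (restrictions of) birational automorphisms of $\mathbb{C}^{n-3}$.

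Next I would invoke the second theorem, which identifies $G_n$ with $S_n$ for $n \geq 5$ via a fixed group isomorphism $\varphi : S_n \xrightarrow{\sim} G_n$. Transporting the $G_n$-action on $K_n$ through $\varphi$ defines an $S_n$-action on $K_n$ by $\sigma \cdot \boldsymbol{\lambda} := \varphi(\sigma)(\boldsymbol{\lambda})$; this is automatically a well-defined action by isomorphisms of $K_n$ because $\varphi$ is a group isomorphism and each element of $G_n$ restricts to an isomorphism of $K_n$. The orbit spaces for the two actions coincide tautologically, so $\mathfrak{M_{0,\:n}} = K_n/G_n = K_n/S_n$, which is exactly the claim.

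The only thing one might want to say explicitly is that this identification is natural from the point of view of marked points: intuitively, a point of $K_n$ encodes an ordered $n$-tuple of distinct points on $\widehat{\mathbb{C}}$ (after normalizing three of them to $0, 1, \infty$ by a Möbius transformation and recording the remaining $n-3$ cross-ratio-type coordinates), and the $S_n$-action permutes the $n$ marked points; the content of the preceding theorem is precisely that these permutations descend to the $G_n$-action on $K_n$. Since this naturality is not claimed in the statement of the corollary, the proof does not need to dwell on it.

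There is essentially no obstacle here: the corollary is a one-line consequence once both theorems from Section~\ref{grou} (and Section~\ref{orbif}) are in hand. The real work lies in those two theorems — identifying $K_n$ and $G_n$ and then showing $G_n \cong S_n$ — which are handled in the later sections; the corollary itself is just a restatement of their combination.
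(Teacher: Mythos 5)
Your proposal is correct and matches the paper's (implicit) argument: the corollary is stated as an immediate consequence of the theorem $\mathfrak{M_{0,\:n}}=K_n/G_n$ together with the theorem $G_n\simeq S_n$ for $n\ge5$, and transporting the $G_n$-action along that isomorphism is exactly what the paper intends. No further comment is needed.
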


For $\boldsymbol {\lambda}\in K_n$, set
$
[\boldsymbol {\lambda}]=\{0, 1, \infty, \lambda_1, \cdots \lambda_{n-3}\}
$, $
G_{\boldsymbol\lambda}=\{g_{\sigma}\in G_n\:|\:g_\sigma(\boldsymbol\lambda)=\boldsymbol\lambda\}.
$
Given a subset $\alpha$ of the extended complex plane, let $\mathcal{A}_{\alpha}$ denote its stabilizer, i.e., the group consists of all the M\" obius transformations which leave $\alpha$ invariant. In Section \ref{sing} we shall prove that
\begin{thm}
$G_{\boldsymbol \lambda}$ is isomorphic to $\mathcal{A}_{[\boldsymbol {\lambda}]}$ when $n\ge5$.
\end{thm}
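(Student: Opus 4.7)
The plan is to construct an explicit isomorphism $\Phi\colon G_{\boldsymbol\lambda}\to\mathcal{A}_{[\boldsymbol\lambda]}$ by reading off, from each permutation $\sigma\in S_n\cong G_n$ that fixes $\boldsymbol\lambda$, the unique M\"obius transformation that realizes the attendant renormalization. I would begin by unpacking the description of the $S_n$-action on $K_n$ that underlies the identification $G_n\cong S_n$ from the previous theorem: each $\boldsymbol\lambda\in K_n$ is presented as the ordered tuple $(p_1,\dots,p_n)=(0,1,\infty,\lambda_1,\dots,\lambda_{n-3})$, and a permutation $\sigma\in S_n$ reorders this to $(p_{\sigma^{-1}(1)},\dots,p_{\sigma^{-1}(n)})$, after which the unique M\"obius transformation $\phi_\sigma$ sending the first three entries to $0,1,\infty$ is applied; the last $n-3$ entries of $\big(\phi_\sigma(p_{\sigma^{-1}(i)})\big)_i$ are the coordinates of $g_\sigma(\boldsymbol\lambda)$. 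The condition $g_\sigma(\boldsymbol\lambda)=\boldsymbol\lambda$ is then equivalent to $\phi_\sigma(p_{\sigma^{-1}(i)})=p_i$ for every $i=1,\dots,n$, which in particular forces $\phi_\sigma$ to permute the set $[\boldsymbol\lambda]$; hence $\phi_\sigma\in\mathcal{A}_{[\boldsymbol\lambda]}$, and I set $\Phi(g_\sigma)=\phi_\sigma$.

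The remaining steps are the three standard verifications. Well-definedness and uniqueness of $\phi_\sigma$ come from the fact that a M\"obius transformation is determined by its action on any three distinct points. For the homomorphism property, I would chase a single tuple $(p_1,\dots,p_n)$ through the composition of the $S_n$-action corresponding to $\sigma\tau$ and compare with the two-step application first by $\tau$ then by $\sigma$; uniqueness of the normalizing M\"obius transformation at each stage forces $\phi_{\sigma\tau}=\phi_\sigma\circ\phi_\tau$. Injectivity is immediate: if $\phi_\sigma=\mathrm{id}$ then $p_{\sigma^{-1}(i)}=p_i$ for all $i$, and since the $p_i$ are pairwise distinct, $\sigma=e$, hence $g_\sigma=e$. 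For surjectivity, any $f\in\mathcal{A}_{[\boldsymbol\lambda]}$ induces a permutation $\sigma\in S_n$ by $f(p_{\sigma^{-1}(i)})=p_i$; by uniqueness the normalizer associated with $\sigma$ is exactly $f$, so $g_\sigma(\boldsymbol\lambda)=\boldsymbol\lambda$ and $\Phi(g_\sigma)=f$.

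The main obstacle I anticipate is a purely bookkeeping one: aligning the convention of the $S_n$-action on $K_n$ from the preceding theorem (which side $\sigma$ acts from, and whether the identification uses $\sigma$ or $\sigma^{-1}$) with the composition law of M\"obius transformations, so that the homomorphism check does not silently become an anti-homomorphism. Once the convention is pinned down, the argument reduces to a short diagram chase that relies only on the $3$-transitivity of $\mathrm{PSL}(2,\mathbb{C})$ on $\widehat{\mathbb{C}}$ and the hypothesis $n\geq 5$, which ensures that there are at least two indices $i\geq 4$ so that $\sigma$ is fully determined by its action on the labeled points of $[\boldsymbol\lambda]$.
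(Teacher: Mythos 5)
Your proposal is correct and is essentially the paper's own argument: your $\phi_\sigma$ is exactly the paper's normalizing map $f^{\boldsymbol\lambda}_\sigma$, and the paper's (sketched) proof is precisely the map $g_\sigma\mapsto f^{\boldsymbol\lambda}_\sigma$ together with the identities $f^{\boldsymbol\lambda}_\sigma(z^{\boldsymbol\lambda}_k)=z^{g_\sigma(\boldsymbol\lambda)}_{\sigma(k)}$ and $f^{g_\sigma(\boldsymbol\lambda)}_\pi\circ f^{\boldsymbol\lambda}_\sigma=f^{\boldsymbol\lambda}_{\pi\sigma}$ that you propose to verify. You in fact supply more detail (well-definedness, injectivity, surjectivity) than the paper, which only states ``it is easy to check.''
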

Thus the classification of the orbifold singularities of $\mathfrak{M_{0,\:n}}$ when $n\ge5$ is reduced to the classification of all the stabilizers of sets that consist of $n$ distinct points on the extended complex plane ${\widehat {\Bbb C}}$.

\subsection{Finite Subsets with Their Stabilizers Prescribed}
\label{subsec:subsets}

Given a finite subset $\alpha$ of the extended complex plane, it is obvious that its stabilizer is finite when $|\alpha|\ge3$. There are three types of non-trivial finite groups consist of M\"obius transformations: the polyhedral groups, the dihedral groups and the cyclic groups. Let $G$ be one of these groups. We will give a classification of finite subsets whose stabilizer equals $G$. We shall work on the Riemann sphere
$$S=\{(x_1,x_2,x_3)\in {\Bbb R}^3:\, x_1^2+x_2^2+x_3^2=1\}$$
in the polyhedral cases and on $\widehat{\Bbb C}$ in both the dihedral and the cyclic cases.

Suppose that $G$ is isomorphic to an icosahedral group (or octahedral group). Then it fixes a regular dodecahedron (resp. cube) whose center is the origin of the Riemann sphere $S$. Let $V$, $F$ and $E$ denote its vertices, the projections of the central points of its faces on $S$ and the projections of the middle points of its edges on $S$ respectively, with the origin being the central of the projection. For any $X\in S \backslash (V\cup F\cup E)$, define $B(X)$ as the orbit of $X$ under $G$.
\begin{defn}
For any subset $\alpha\subseteq S$ that is a finite union of the orbits of
$G$ ($\simeq A_5$ or $S_4$), define its \emph{(icosahedral or octahedral) component index} as
$(\nu, \mu, \epsilon, k)\in\{0, 1\}^3\times\mathbb N$, where $\nu, \mu, \epsilon, k$ indicate the number of orbit(s) in the $V$, $F$, $E$ and $B$ type, respectively.
\end{defn}
If the stabilizer of an finite subset $\alpha$ equals $G$, $\alpha$ must be a finite union of the orbits of $G$. Conversely, if $\alpha$ is a finite union of the orbits of $G$, its stabilizer $\mathcal{A}_{\alpha}$ includes $G$ as a subgroup. Since $\mathcal{A}_{\alpha}$ is a finite subgroup of ${\rm PSL}(2,\,{\Bbb C})$, $\mathcal{A}_{\alpha}$ equals $G$. Thus we have
\begin{thm}
For any finite subset $\alpha\subseteq S$, its stabilizer $\mathcal{A}_{\alpha}$ equals $G$ ($\simeq A_5$ or $S_4$) if and only if $\alpha$ is a finite union of certain elements in $\{V,\:F,\:E\}\cup\{B(X)|X\in S \backslash (V\cup F\cup E)\}$.
\end{thm}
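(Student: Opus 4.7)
The plan is to prove both implications by relying on two ingredients: the orbit structure of the action of $G$ on the sphere $S$, and the classical classification of finite subgroups of ${\rm PSL}(2,\mathbb C)$ recalled in the introduction.

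For the forward direction, assume ${\mathcal A}_\alpha = G$. Every $g\in G$ permutes $\alpha$, so $\alpha$ is a finite disjoint union of $G$-orbits; the task reduces to identifying every $G$-orbit on $S$ with one of the listed sets. I would invoke the standard description of the polyhedral action: the group $G$ acts transitively on the vertices, on the face-centres and on the edge-midpoints of its fixed regular polyhedron, so $V$, $F$, $E$ are each single $G$-orbits. Any point of $V\cup F\cup E$ has a non-trivial cyclic stabilizer in $G$ (of orders matching the polyhedral data), whereas any $X\in S\setminus(V\cup F\cup E)$ has trivial stabilizer and its orbit has size $|G|$, which is by definition $B(X)$. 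This exhausts all $G$-orbits on $S$.

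For the reverse direction, if $\alpha$ is a finite union drawn from $\{V,F,E\}\cup\{B(X)\,|\,X\in S\setminus(V\cup F\cup E)\}$, then $\alpha$ is manifestly $G$-invariant, so $G\subseteq{\mathcal A}_\alpha$. Since $|\alpha|\geq 3$, ${\mathcal A}_\alpha$ is a finite subgroup of ${\rm PSL}(2,\mathbb C)$. By the classification, up to conjugacy the finite subgroups are cyclic, dihedral, $A_4$, $S_4$ or $A_5$. When $G\simeq A_5$, the group $A_5$ is maximal in this list, forcing ${\mathcal A}_\alpha=G$. When $G\simeq S_4$, the only strictly larger candidate would be $A_5$; but $|S_4|=24$ does not divide $|A_5|=60$, so $S_4$ cannot be embedded in $A_5$, and we again conclude ${\mathcal A}_\alpha=G$.

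The main obstacle is not the overall logic but the careful bookkeeping: one must state precisely the $G$-orbit types on $S$ (sizes of $V,F,E$, transitivity, and freeness of the action away from these loci) and then translate an abstract group inclusion into a concrete equality of subgroups of ${\rm PSL}(2,\mathbb C)$. Both reduce to facts standard in the classification of finite Möbius groups, so once they are laid out the argument is essentially immediate.
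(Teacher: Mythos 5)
Your proposal is correct and follows essentially the same route as the paper: decompose $\alpha$ into $G$-orbits (which are exactly $V$, $F$, $E$ and the generic orbits $B(X)$) for the forward direction, and for the converse use $G\subseteq\mathcal{A}_\alpha$ together with the finiteness of $\mathcal{A}_\alpha$ and the classification of finite subgroups of ${\rm PSL}(2,\mathbb C)$ to force equality. You actually spell out more of the maximality argument (why no finite Möbius group properly contains $A_5$ or $S_4$) than the paper, which compresses this into a single sentence.
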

\begin{cor}
For each non-zero element $(\nu, \mu, \epsilon, k)\in\{0, 1\}^3\times\mathbb N$, there exist some finite subsets $\alpha,\:\beta\subseteq S$ such that $\mathcal{A}_{\alpha}\simeq A_5$ and $\mathcal{A}_{\beta}\simeq S_4$, both with the component index $(\nu, \mu, \epsilon, k)$. Furthermore, we have
$$
\{|\alpha|<\infty\:|\:\mathcal{A}_{\alpha}\simeq A_5\}=
\{x+60k\:|\:x=0, 12, 20, 30, 32, 42, 50, 62,\:k\in\mathbb N,\:x^2+k^2\ne0\};
$$
$$
\{|\beta|<\infty\:|\:\mathcal{A}_{\alpha}\simeq S_4\}=
\{x+24k\:|\:x=0, 6, 8, 12, 14, 18, 20, 26,\:k\in\mathbb N,\:x^2+k^2\ne0\}.
$$
\end{cor}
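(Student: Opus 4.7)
The plan is to leverage the theorem immediately preceding this corollary, which reduces everything to identifying finite unions of $G$-orbits on $S$. The argument naturally splits into (a) computing the sizes of the special and generic orbits, (b) realizing each prescribed component index, and (c) enumerating the resulting cardinalities.

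First, I would record the orbit sizes via the orbit-stabilizer theorem. A generic point of $S$ has trivial stabilizer in $G$, hence $|B(X)| = |G|$, which equals $60$ in the icosahedral case and $24$ in the octahedral case. The special orbits $V$, $F$, $E$ correspond to points with cyclic point-stabilizers of orders $3,5,2$ for the dodecahedron and $3,4,2$ for the cube, giving $|V|,|F|,|E| = 20,12,30$ for $A_5$ and $8,6,12$ for $S_4$.

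Next, for the existence of $\alpha$ (respectively $\beta$) realizing a given non-zero tuple $(\nu,\mu,\epsilon,k)$: I would inductively choose $X_1,\dots,X_k\in S\setminus(V\cup F\cup E)$ whose $G$-orbits are pairwise disjoint. Such a choice is possible because $S\setminus(V\cup F\cup E)$ is uncountable while each $G$-orbit is finite, so at each step only finitely many points are forbidden. Taking the union of $B(X_1),\dots,B(X_k)$ together with precisely those of $V,F,E$ selected by the indicators $\nu,\mu,\epsilon$ produces a finite union of $G$-orbits whose stabilizer, by the preceding theorem, is exactly $G$ and whose component index is tautologically $(\nu,\mu,\epsilon,k)$.

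Finally, for the cardinality lists: since $|\alpha|$ is the sum of the included orbit sizes, in the $A_5$ case $|\alpha| = 20\nu + 12\mu + 30\epsilon + 60k$, and running over $(\nu,\mu,\epsilon)\in\{0,1\}^3$ yields the eight values $x := 20\nu+12\mu+30\epsilon \in \{0,12,20,30,32,42,50,62\}$; the constraint $x^2+k^2\neq 0$ simply excludes $\alpha=\emptyset$. The $S_4$ case is entirely parallel with $8\nu+6\mu+12\epsilon+24k$ producing $x\in\{0,6,8,12,14,18,20,26\}$. I expect no substantive obstacle: the proof is geometric bookkeeping plus the density remark used to separate generic orbits. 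The one check worth making explicit is that the eight values of $x$ are pairwise distinct in each case (so that the set-description on the right is unambiguous), which is immediate by inspection.
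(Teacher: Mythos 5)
Your proposal is correct and follows exactly the route the paper intends: the corollary is stated as an immediate consequence of the preceding theorem, and your orbit-stabilizer computation of the orbit sizes ($20,12,30,60$ for the dodecahedron and $8,6,12,24$ for the cube), together with the choice of $k$ pairwise distinct generic orbits, supplies precisely the bookkeeping the paper leaves implicit. No gaps.
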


Suppose that $G$ is isomorphic to a tetrahedral group. Then it fixes two regular tetrahedrons whose centers are the origin of the Riemann sphere $S$. Choose one of the tetrahedrons (this choice will not affect our conclusions). Let $V$, $F$ and $E$ denote its vertices, the projections of the central points of its faces on $S$ and the projections of the middle points of its edges on $S$ respectively, with the origin being the central of the projection. For any $X\in S \backslash (V\cup F\cup E)$, define $B(X)$ as the orbit of $X$ under $G$. Notice that $V$ and $F$ have the same cardinality and thus are considered to be in the same type.
\begin{defn}
For any subset $\alpha\subseteq S$ that is a finite union of the orbits of
$G$ ($\simeq A_4$), define its \emph{(tetrahedral) component index} as $(\nu, \epsilon, k)\in\{0, 1, 2\}\times\{0, 1\}\times\mathbb N$, where $\nu, \epsilon, k$ indicate the number of orbit(s) in the $V(F)$, $E$ and $B$ type, respectively.
\end{defn}
If the stabilizer of an finite subset $\alpha$ equals $G$, $\alpha$ must be a finite union of the orbits of $G$. Conversely, if $\alpha$ is a finite union of the orbits of $G$, its stabilizer $\mathcal{A}_{\alpha}$ includes $G$ as a subgroup. Since $\mathcal{A}_{\alpha}$ is a finite subgroup of ${\rm PSL}(2,\,{\Bbb C})$, $\mathcal{A}_{\alpha}$ could be isomorphic to $A_5$, $S_4$ or $A_4$.
\begin{description}
\item[CASE 1: $\mathcal{A}_{\alpha}$ is Isomorphic to $A_5$.]
If $\mathcal{A}_{\alpha}$ is isomorphic to $A_5$, then $\mathcal{A}_{\alpha}$ fixes some regular dodecahedron $I$ whose center is the origin. Their relative positions are shown in Figure \ref{A_5, A_4}.
\begin{figure}[!h]
\centering
\includegraphics[width=3in]{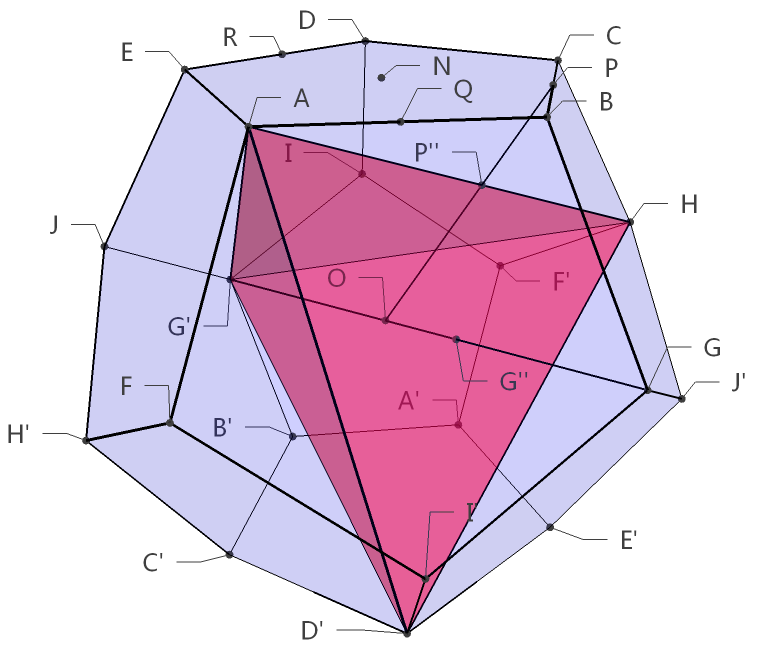}
\caption{The dodecahedron $I$ fixed by $\mathcal{A}_{\alpha}$ and the tetrahedron $T$ fixed by $G$.}
\label{A_5, A_4}
\end{figure}

It is easy to check that ($V_I$, $F_I$, $E_I$ and $B_I(X)$ denote the orbits under the icosahedral group here)
$$
V_I=V\cup F\cup B(B), F_I=B(N), E_I=E\cup B(Q)\cup B(R),
$$
where $N$ denotes the center of the pentagon $ABCDE$, and $Q$, $R$ denote the midpoints of $AB$, $DE$ respectively, and
$$
B_I(X)=B(X)\cup B(g(X))\cup B(g^2(X))\cup B(g^3(X))\cup B(g^4(X)),
$$
where $g$ denotes the rotation of order $5$ that fixes the pentagon $ABCDE$ for each $X\in S \backslash (V_I\cup F_I\cup E_I)$.
\item[CASE 2: $\mathcal{A}_{\alpha}$ Is isomorphic to $S_4$.]
If $\mathcal{A}_{\alpha}$ is isomorphic to $S_4$, then $\mathcal{A}_{\alpha}$ fixes some regular cube $O$ whose center is the origin. Their relative positions are shown in Figure \ref{S_4, A_4}.
\begin{figure}[!h]
\centering
\includegraphics[width=2in]{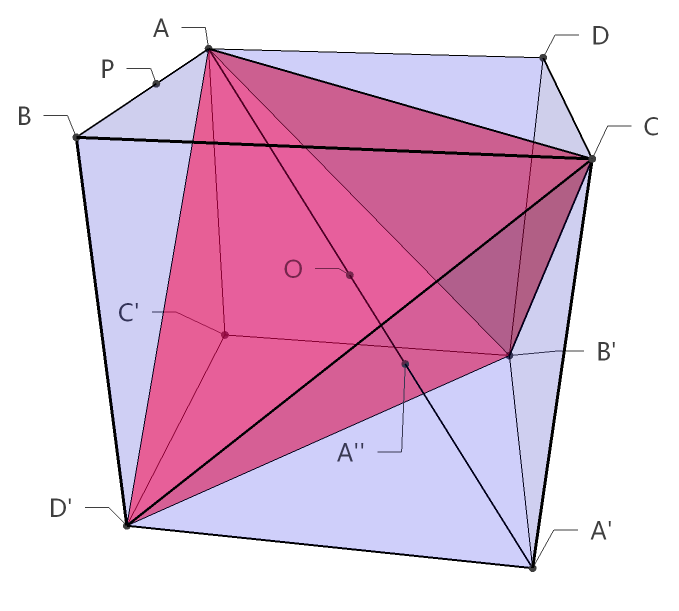}
\caption{The cube $O$ fixed by $\mathcal{A}_{\alpha}$ and the tetrahedron $T$ fixed by $G$.}
\label{S_4, A_4}
\end{figure}

It is easy to check that ($V_O$, $F_O$, $E_O$ and $B_O(X)$ denote the orbits under the octahedral group here)
$$
V_O=V\cup F,\:F_O=E,\:E_O=B(P)
$$
where $P$ denotes the midpoint of $AB$ and
$$
B_O(X)=B(X)\cup B(g(X))
$$
where $g$ denotes the rotation of order $4$ that fixes the the square $ABCD$ for each $X\in S \backslash (V_O\cup F_O\cup E_O)$.
\end{description}
\begin{thm}
For any finite subset $\alpha\subseteq S$, its stabilizer $\mathcal{A}_{\alpha}$ equals $G$ ($\simeq A_4$) if and only if all of the three claims are true:
\begin{enumerate}
\item $\alpha$ is a union of certain elements in
$$
\{V,\:F,\:E\}\cup\{B(X)|X\in S \backslash (V\cup F\cup E)\};
$$
\item $\alpha$ is NOT a union of certain elements in
$$
\{V_I,\:F_I,\:E_I\}\cup\{B_I(X)|X\in S \backslash (V_I\cup F_I\cup E_I)\};
$$
\item  $\alpha$ is NOT a union of certain elements in
$$
\{V_O,\:F_O,\:E_O\}\cup\{B_O(X)|X\in S \backslash (V_O\cup F_O\cup E_O)\}.
$$
\end{enumerate}
\end{thm}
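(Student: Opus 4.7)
The plan is to prove both implications by combining two ingredients. The first is the classical fact that the only finite subgroups of $\text{PSL}(2,\mathbb{C})$ that contain a subgroup isomorphic to $A_4$ are $A_4$, $S_4$, and $A_5$ themselves; cyclic and dihedral groups are excluded immediately because $A_4$ is non-abelian and no dihedral group of order $12$ contains a subgroup isomorphic to $A_4$. The second ingredient is the explicit orbit identities recorded in CASE 1 and CASE 2, which translate the abstract statement that $\alpha$ is a union of $A_5$-orbits (respectively $S_4$-orbits) into the literal membership conditions appearing in (2) and (3).

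For the forward direction, assume $\mathcal{A}_\alpha = G$. Then $\alpha$ is $G$-invariant, so it decomposes as a disjoint union of $G$-orbits; since every $G$-orbit on $S$ is one of $V$, $F$, $E$, or $B(X)$ for some $X\in S\setminus(V\cup F\cup E)$, condition (1) follows. For (2), if $\alpha$ were a union of the elements listed in (2), then $\alpha$ would be invariant under the icosahedral group of the dodecahedron $I$ of CASE 1, producing an inclusion $A_5\subseteq\mathcal{A}_\alpha$ that contradicts $\mathcal{A}_\alpha=G$. The same argument with the cube $O$ of CASE 2 rules out (3).

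For the converse, assume (1), (2), (3). From (1) we obtain $G\subseteq\mathcal{A}_\alpha$, and since $|\alpha|\geq 4$ the stabilizer $\mathcal{A}_\alpha$ is a finite subgroup of $\text{PSL}(2,\mathbb{C})$. By the general fact above, $\mathcal{A}_\alpha$ is isomorphic to $A_4$, $S_4$, or $A_5$. Suppose $\mathcal{A}_\alpha\simeq A_5$. Given $G$, there is a unique regular dodecahedron inscribing $T$ as one of its five inscribed tetrahedra, so any $A_5$ containing $G$ fixes exactly the dodecahedron $I$ of CASE 1. The $\mathcal{A}_\alpha$-invariance of $\alpha$ then forces $\alpha$ to be a union of $A_5$-orbits, which by the identities in CASE 1 are precisely the sets $V_I$, $F_I$, $E_I$, and the $B_I(X)$, contradicting (2). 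The case $\mathcal{A}_\alpha\simeq S_4$ is excluded in the same way via the cube $O$ of CASE 2. Hence $\mathcal{A}_\alpha\simeq A_4$, and since $G\subseteq\mathcal{A}_\alpha$ with both groups of order $12$, we conclude $\mathcal{A}_\alpha=G$.

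The main obstacle is the geometric uniqueness step invoked in the converse: the claim that there is a unique dodecahedron (respectively cube) whose icosahedral (respectively octahedral) rotation group contains $G$. For the cube this is immediate once one identifies the vertices of $T$ with four mutually non-adjacent vertices of the cube; for the dodecahedron it follows from the classical compound of five tetrahedra inscribed in a regular dodecahedron. Once this uniqueness is in place, the identities of CASE 1 and CASE 2 act as exact dictionaries between $A_5$- and $S_4$-orbits and the corresponding unions of $G$-orbits, so the remainder of the argument is bookkeeping.
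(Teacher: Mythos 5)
Your overall strategy---forward direction by decomposing $\alpha$ into $G$-orbits, converse by listing the finite overgroups of $A_4$ in $\mathrm{PSL}(2,\mathbb{C})$ and excluding $A_5$ and $S_4$ via conditions (2) and (3)---is the same route the paper takes (its ``proof'' is the discussion surrounding CASE 1 and CASE 2). But your converse hinges on an explicit claim that is false: that there is a \emph{unique} regular dodecahedron whose icosahedral rotation group contains $G$. In fact there are exactly two. Group-theoretically: any icosahedral overgroup $H\simeq A_5$ of $G$ satisfies $N_H(G)=G$, since $A_4$ is self-normalizing in $A_5$, while $N_{SO(3)}(G)\simeq S_4$ has order $24$; the usual orbit count then yields $[N_{SO(3)}(G):G]=2$ such overgroups $H_1,H_2$, interchanged by any $n\in N_{SO(3)}(G)\setminus G$. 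Geometrically this is precisely the chirality of the compound of five tetrahedra that you cite: a regular tetrahedron $T$ is inscribed in an \emph{enantiomorphic pair} of dodecahedra, not in one. The gap is not cosmetic. Take $\alpha$ to be the $20$ vertices of the other dodecahedron $I_2=n(I)$, i.e.\ $\alpha=V\cup F\cup B(n(B))$, where $B$ is the pentagon vertex of CASE 1; here $B(n(B))\neq B(B)$ because no element of $N_{SO(3)}(G)\setminus G$ fixes a dodecahedron vertex. Then $\alpha$ is a union of $G$-orbits, so (1) holds; it is not a union of orbits of the icosahedral group of $I$ nor of the octahedral group of $O$, since its stabilizer contains $H_2$ and adjoining any element of $H_1$ or of $N_{SO(3)}(G)$ would generate an infinite group while the stabilizer of a $20$-point set is finite, so (2) and (3) hold as well; yet $\mathcal{A}_\alpha\simeq A_5\neq G$. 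So the converse, as you state it, fails.

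The repair is to impose condition (2) for \emph{both} icosahedral overgroups of $G$, i.e.\ for $I$ and its mirror partner $n(I)$ (equivalently, to quantify (2) over all dodecahedra whose rotation group contains $G$). Your treatment of the cube is correct: $A_4$ is normal in $S_4$, so any octahedral overgroup of $G$ normalizes $G$ and hence equals $N_{SO(3)}(G)$; the cube $O$ really is unique and (3) needs no change. Be aware that the paper's own argument (``their relative positions are shown in Figure~1'') silently makes the same uniqueness assumption; by making it explicit you have exposed that it is wrong, and that the statement needs either the extra mirror-image condition or a (false, by the example above) lemma that every $H_2$-invariant union of $G$-orbits is $H_1$-invariant.
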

From the discussion above we know that for each non-zero element $(\nu,\:\epsilon,\:k)\in\{0,\:1,\:2\}\times\{0,\:1\}\times\mathbb N$,
$(\nu, \epsilon, k)\ne(2, 0, 0)$, $(0, 1, 0)$ or $(2, 1, 0)$,
there exists some finite subset $\alpha\subseteq S$ such that $\mathcal{A}_{\alpha}\simeq A_4$ with tetrahedral component index $(\nu, \epsilon, k)$.
When the tetrahedral component index of $\alpha$ is $(2, 0, 0)$, $(0, 1, 0)$ or $(2, 1, 0)$, $\mathcal{A}_{\alpha}$ is isomorphic to $S_4$.

\begin{thm}
The existence of $\alpha$ such that $\mathcal{A}_{\alpha}\simeq S_4$ with arbitrary element in $\{0, 1, 2\}\times\{0, 1\}\times\mathbb N$ as its tetrahedral component index is shown in the table below, along with all possible cardinality of $\alpha$.
$$
\begin{array}{c|c|c|c|c|c|c}
\mathrm{Component\,Index}
&(1, 0, 0)&(2, 0, 0)&(0, 1, 0)&(1, 1, 0)&(2, 1, 0)
&(\nu, \epsilon, k),\:k\ge1\\\hline
\mathrm{Existence\:}(|\alpha|)
&\mathrm{Yea\:}(4)&\mathrm{Nay}&\mathrm{Nay}&\mathrm{Yea\:}(10)&\mathrm{Nay}
&\mathrm{Yea\:}(4\nu+6\epsilon+12k)\\
\end{array}
$$
$$
\{|\alpha|<\infty\:|\:\mathcal{A}_{\alpha}\simeq A_4\}=
\{x+12k\:|\:x=0, 4, 6, 8, 10, 14,\:k\in\mathbb N^+,\}
\cup\{4, 10\}.
$$
\end{thm}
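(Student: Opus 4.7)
The strategy is to realize each admissible tetrahedral component index by an explicit construction, confirm via the characterization in the preceding theorem that no accidental larger symmetry appears, and then compile the set of cardinalities. The numerical input is the orbit sizes under $G\simeq A_4$: by orbit--stabilizer one has $|V|=|F|=4$ (the two dual tetrahedra), $|E|=6$ (an inscribed octahedron), and $|B(X)|=12$ for every $X$ off the three exceptional axes, so a union of orbits with component index $(\nu,\epsilon,k)$ has cardinality $4\nu+6\epsilon+12k$, matching the right-hand column of the table.

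For the three ``Nay'' entries I would invoke the octahedral description from CASE 2 above: $V\cup F=V_O$, $E=F_O$, and $V\cup F\cup E=V_O\cup F_O$. Hence any $\alpha$ with tetrahedral index $(2,0,0)$, $(0,1,0)$, or $(2,1,0)$ is automatically a union of octahedral orbits, so by the preceding theorem $\mathcal A_\alpha$ must strictly contain $G$, ruling out $\mathcal A_\alpha\simeq A_4$. For the degenerate ``Yea'' entries I would take $\alpha=V$ (cardinality $4$) and $\alpha=V\cup E$ (cardinality $10$); since the smallest nontrivial $S_4$- or $A_5$-orbit has size $6$, neither $4$ nor $10$ can be partitioned into $S_4$- or $A_5$-orbit sizes, so the preceding theorem yields $\mathcal A_\alpha=A_4$ in both cases.

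For indices with $k\ge 1$ I would set $\alpha=(\nu\text{ copies from }\{V,F\})\cup(\epsilon\cdot E)\cup B(X_1)\cup\cdots\cup B(X_k)$ with the $X_i$ chosen generically. Each $X_i$ should lie outside $V_I\cup F_I\cup E_I$ and $V_O\cup F_O\cup E_O$ for every dodecahedron $I$ and cube $O$ whose associated finite subgroup of $\mathrm{PSL}(2,\mathbb C)$ contains $G$, in pairwise distinct $A_4$-orbits, and in distinct octahedral and icosahedral orbits. Only finitely many such $I$ and $O$ exist and each imposes only finitely many forbidden points on $S$, so the bad locus is finite and a generic choice produces $\alpha$ with the prescribed index and $\mathcal A_\alpha=A_4$.

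The main obstacle is this last genericity argument: extra symmetry can arise not only when some individual $B(X_i)$ happens to coincide with an $A_5$- or $S_4$-orbit (the typical case being $B(X_i)=F_I$, which has size $12$), but also when the collective union of the chosen components accidentally reassembles into a union of larger orbits. Requiring the $X_i$ to lie in distinct octahedral and icosahedral orbits handles both phenomena. Finally, enumerating $4\nu+6\epsilon+12k$ over admissible indices yields $\{4,10\}$ when $k=0$ and, for $k\ge 1$, the six residues $4\nu+6\epsilon\in\{0,4,6,8,10,14\}$, producing exactly $\{x+12k:x\in\{0,4,6,8,10,14\},\,k\in\mathbb N^+\}\cup\{4,10\}$.
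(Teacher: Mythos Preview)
Your proposal is correct and follows essentially the same approach as the paper. The paper derives the table directly from the orbit decompositions in CASE~1 and CASE~2 and simply asserts the result ``from the discussion above''; you supply the explicit cardinality and generic-position arguments that the paper leaves implicit.
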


Suppose that G is isomorphic to a dihedral group. In this case we go back to work on $\widehat{\mathbb C}$. Without loss of generality set $(n\ge2)$
$$
G=\langle z\mapsto e^{\frac{1}{n}2\pi i}z,\:
          z\mapsto \frac{1}{z}              \rangle\simeq D_n.
$$
There are three types of orbits:
$$
V=\{0, \infty\},
$$
$$
A_n=\{e^{\frac{k}{n}2\pi i}|k\in\mathbb{Z}\},\:
B_n=\{e^{\frac{2k+1}{2n}2\pi i}|k\in\mathbb{Z}\}
$$
($A_n$ and $B_n$ have the same cardinality and are thus considered to be in the same type), and
$$
C_n(z)=\{ze^{\frac{k}{n}2\pi i}|k\in\mathbb{Z}\}\cup\{z^{-1}e^{\frac{k}{n}2\pi i}|k\in\mathbb{Z}\}
$$
for $z\in{\mathbb C}^*\backslash\{e^{\frac{l}{2n}2\pi i}|l\in\mathbb{Z}\}$ when $n\ge3$. When $n=2$, there are only two types of orbits since $V$, $A_2$ and $B_2$ all have the same cardinality $2$.
\begin{defn}
For any subset $\alpha\subseteq S$ that is a finite union of the orbits of $G$ ($\simeq D_n$, $n\ge3$), define its \emph{(dihedral-$n$) component index} as
$(\nu, \epsilon, k)\in\{0, 1\}\times\{0, 1, 2\}\times\mathbb N$, where $\nu, \epsilon, k$ indicate the number of orbit(s) in the $V$, $A_n(B_n)$ and $C_n$ type, respectively.

For any subset $\alpha\subseteq S$ that is a finite union of the orbits of $G$ ($\simeq K_4$), define its \emph{(dihedral-$2$) component index} as
$(\nu, k)\in\{0, 1, 2, 3\}\times\mathbb N$, where $\nu, k$ indicate the number of orbit(s) in the $V(A_2,\:B_2)$ and $C_2$ type, respectively.
\end{defn}
If the stabilizer of an finite subset $\alpha$ equals $G$, $\alpha$ must be a finite union of the orbits of $G$. Conversely, if $\alpha$ is a finite union of the orbits of $G$, its stabilizer $\mathcal{A}_{\alpha}$ includes $G$ as a subgroup. Since $\mathcal{A}_{\alpha}$ is itself a finite group consisting of M\"obius transformations, $\mathcal{A}_{\alpha}$ could be isomorphic to $A_5$ (when $n=2$, $3$ or $5$), $S_4$ (when $n=2$, $3$ or $4$), $A_4$ (when $n=2$) or $D_{pn}$ for $p\in\mathbb Z^+$. Since the $A_5,\:S_4,\:A_4$ cases have been explored, we shall discuss the case when $\mathcal{A}_{\alpha}$ is isomorphic to $D_{pn},\:p\ge2$.
Set $\mathcal{A}_{\alpha}=\langle \rho,\pi|(\rho)^{pn}=(\pi)^2=(\rho\pi)^2=e\rangle\simeq D_{pn}$.\\

\begin{description}
\item[CASE 1: $G\simeq D_n$, $n\ge3$.]
This is simple since there must be certain $m\in\mathbb Z$ such that $\rho^m(z)=e^{\frac{1}{n}2\pi i}z$. Thus
$$
\mathcal{A}_{\alpha}=\langle z\mapsto e^{\frac{1}{pn}2\pi i}z,\:z\mapsto\frac{1}{z} \rangle
$$
and the relations of the orbits of $G$ and $\mathcal{A}_{\alpha}$ are obvious.
\item[CASE 2: $G\simeq K_4$.]
This is relatively complicated for there are three possible relationships between $G$ and $\mathcal{A}_{\alpha}$, i.e.,
$$
\rho^p(z)=-z;\:\rho^p(z)=\frac{1}{z};\:\rho^p(z)=\frac{-1}{z}.
$$
It is easy to check that the orbits of $\mathcal{A}_{\alpha}$ are
$$
V,\:A_{2p},\:B_{2p},\:C_{2p}(z),\:z\in{\mathbb C}^*\backslash\{e^{\frac{l}{4p}2\pi i}|l\in\mathbb{Z}\};
$$
$$
\phi(V,)\:\phi(A_{2p}),\:\phi(B_{2p}),\:\phi(C_{2p}(z)),\:z\in{\mathbb C}^*\backslash\{e^{\frac{l}{4p}2\pi i}|l\in\mathbb{Z}\};
$$
$$
\psi(V,)\:\psi(A_{2p}),\:\psi(B_{2p}),\:\psi(C_{2p}(z)),\:z\in{\mathbb C}^*\backslash\{e^{\frac{l}{4p}2\pi i}|l\in\mathbb{Z}\};
$$
respectively, where
$$
 \phi(z)=\frac{z-1}{z+1},\: \psi(z)=\frac{z+i}{iz+1}.
$$
\end{description}
\begin{thm}\label{tD_n}
For any finite subset $\alpha\subseteq S$, $|\alpha|\ge3$, its stabilizer $\mathcal{A}_{\alpha}$ equals $G$ ($\simeq D_n)$ $(n\ge2)$ if and only if all of the five claims are true:
\begin{enumerate}
\item $\alpha$ is a union of certain elements in
$$
\{V,\:A_n,\:B_n\}\cup\{C_n(z)|z\in{\mathbb C}^*\backslash\{e^{\frac{l}{2n}2\pi i}|l\in\mathbb{Z}\}\};
$$
\item $\alpha$ is NOT a union of certain elements in
$$
\{V,\:A_{pn},\:B_{pn}\}\cup\{C_{pn}(z)|z\in{\mathbb C}^*\backslash\{e^{\frac{l}{2np}2\pi i}|l\in\mathbb{Z}\}\},\:p\ge2;
$$
\item when $n=2$, $\alpha$ is NOT a union of certain elements in
$$
\{\phi(V),\:\phi(A_{2p}),\:\phi(B_{2p})\}\cup\{\phi(C_{2p}(z))|z\in\mathbb C^*\backslash\{e^{\frac{l}{4p}2\pi i}|l\in\mathbb{Z}\}\},\:p\ge2;
$$
\item when $n=2$, $\alpha$ is NOT a union of certain elements in
$$
\{\psi(V),\:\psi(A_{2p}),\:\psi(B_{2p})\}\cup\{\psi(C_{2p}(z))|z\in\mathbb C^*\backslash\{e^{\frac{l}{4p}2\pi i}|l\in\mathbb{Z}\}\},\:p\ge2;
$$
\item $\alpha$ is NOT in the icosahedral, octahedral or tetrahedral case,
\end{enumerate}
where $\phi(z)=\frac{z-1}{z+1}$ and $\psi(z)=\frac{z+i}{iz+1}$.
\end{thm}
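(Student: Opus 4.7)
The plan is to prove both directions by exploiting the classification of finite subgroups of $\text{PSL}(2,\mathbb{C})$ and tracking exactly how $D_n$ can embed in a strictly larger such subgroup. The ``only if'' direction is essentially tautological: if $\mathcal{A}_\alpha = G$ then condition (1) is immediate, and if $\alpha$ were a union of orbits of some strictly larger finite group $H \supsetneq G$ (any of the $D_{pn}$'s appearing in conditions (2)--(4), or one of the polyhedral groups alluded to in (5)), then $H \subseteq \mathcal{A}_\alpha$, contradicting $\mathcal{A}_\alpha = G$.

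For the converse, assume all five conditions. Condition (1) gives $G \subseteq \mathcal{A}_\alpha$, and since $|\alpha| \geq 3$ the stabilizer $\mathcal{A}_\alpha$ is a finite subgroup of $\text{PSL}(2,\mathbb{C})$. By the standard classification it is cyclic, dihedral, or polyhedral; condition (5) rules out the polyhedral case, and since $G$ is non-cyclic the cyclic case is impossible. Therefore $\mathcal{A}_\alpha \simeq D_m$ with $n \mid m$; write $m = pn$. It remains to show $p = 1$, for which I argue by contradiction, splitting on the value of $n$ exactly as the preceding Case~1/Case~2 discussion does.

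If $n \geq 3$, the cyclic rotation subgroup $\langle z \mapsto e^{2\pi i/n} z\rangle$ of $G$ has a unique fixed axis (through $0$ and $\infty$), which must therefore agree with the axis of the cyclic index-$2$ subgroup of $\mathcal{A}_\alpha \simeq D_{pn}$. Hence $\mathcal{A}_\alpha = \langle z \mapsto e^{2\pi i/(pn)} z,\ z \mapsto 1/z\rangle$, i.e., precisely the group appearing in Case~1; condition (2) then directly contradicts the fact that $\alpha$ is a union of $\mathcal{A}_\alpha$-orbits. This case is routine.

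The main obstacle is $n = 2$, where $G \simeq K_4$ has three involutions with mutually perpendicular axes, any one of which may be the axis of the generator $\rho$ of $\mathcal{A}_\alpha \simeq D_{2p}$. The element $\rho^p$ is an involution of $\mathcal{A}_\alpha$ lying in $G$, and the three possibilities $\rho^p(z) = -z$, $\rho^p(z) = 1/z$, $\rho^p(z) = -1/z$ enumerated in Case~2 correspond to these three axes. The key calculation to carry out is the verification that $\phi(z) = (z-1)/(z+1)$ and $\psi(z) = (z+i)/(iz+1)$ conjugate the standard dihedral embedding into the two non-standard ones (equivalently, that they permute the three axes), so that the three families of orbits listed in Case~2 exhaust all $D_{2p}$ that contain $G$. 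Once this is checked, conditions (2), (3) and (4) rule out each of the three possible embeddings $K_4 \hookrightarrow D_{2p}$ for every $p \geq 2$, forcing $p = 1$ and hence $\mathcal{A}_\alpha = G$.
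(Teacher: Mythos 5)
Your proposal is correct and follows essentially the same route as the paper: the ``only if'' direction is the tautological orbit-union observation, and the converse proceeds by the finite-subgroup classification, reducing to $\mathcal{A}_\alpha\simeq D_{pn}$ and then splitting into the axis-rigid case $n\ge3$ and the three-axes case $n=2$ handled via the conjugations $\phi$ and $\psi$, exactly as in the paper's Case 1/Case 2 discussion. Your write-up is in fact slightly more explicit than the paper's (e.g.\ in noting that one must check $\rho^p\in G$ and that the three conjugate embeddings of $K_4$ into $D_{2p}$ are exhaustive), which the paper leaves as ``easy to check.''
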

The discussion of the range of possible values for the component index in this case is complicated and left in Section \ref{D_n}. The results are
\begin{thm}\label{cD_n}
The existence of $\alpha$ such that $\mathcal{A}_{\alpha}\simeq D_n$, $n\ge2$ with arbitrary element in $\{0, 1\}\times\{0, 1, 2\}\times\mathbb N$ or $\{0, 1, 2, 3\}\times\mathbb N^+$ as its dihedral-$n$ component index is shown in the tables below, along with all possible cardinality of $\alpha$ in each circumstances.
$$
\begin{array}{c|c|c|c|c|c}
\mathrm{Component\:Index}&(0, 1, 0)&(1, 1, 0)&(0, 2, 0)&(1, 2, 0)& (\nu, \epsilon, k),\:k\ge1\\\hline
n\ge3,\;n\ne4&\mathrm{Yea\:}(n)&\mathrm{Yea\:}(n+2)&\mathrm{Nay}&\mathrm{Nay}&\mathrm{Yea\:}(2\nu+n(\epsilon+2k))\\\hline
n=4&\mathrm{Yea\:}(n)&\mathrm{Nay}&\mathrm{Nay}&\mathrm{Nay}&\mathrm{Yea\:}(2\nu+n(\epsilon+2k))
\end{array}
$$
$$
\begin{array}{c|c|c|c|c}
\mathrm{Component\:Index}&(1, 0)&(2, 0)&(3, 0)&(\nu, k),\:k\ge1\\\hline
n=2&\mathrm{Nay}&\mathrm{Nay}&\mathrm{Nay}&\mathrm{Yea\:}(2\nu+4k)\\
\end{array}
$$
$$
\{|\alpha|<\infty\:|\:\mathcal{A}_{\alpha}\simeq D_n\}=
\{x+nk\:|\:x=0, 2,\:k\ge1,\},\:n\ge3,\:n\ne4.
$$
$$
\{|\alpha|<\infty\:|\:\mathcal{A}_{\alpha}\simeq D_4\}=
\{x+4k\:|\:x=0, 2,\:k\ge2,\}\cup\{4\},\:n=4.
$$
$$
\{|\alpha|<\infty\:|\:\mathcal{A}_{\alpha}\simeq K_4\}=
\{2k\:|\:k\ge2,\},\:n=2.
$$
\end{thm}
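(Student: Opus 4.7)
The plan is to prove the tables and cardinality formulas by a case analysis on the component index, in each case either constructing an explicit $\alpha$ with the prescribed stabilizer or invoking Theorem~\ref{tD_n} to rule out its existence.

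First, I would handle the rows with $k=0$ directly. For $(0,1,0)$ take $\alpha=A_n$, the vertices of a regular $n$-gon on the equator; for $n\geq 3$ with $n\neq 4$ the five clauses of Theorem~\ref{tD_n} can be checked directly, giving $|\alpha|=n$. For $(1,1,0)$ take $\alpha=V\cup A_n$; when $n\neq 4$ this still satisfies all five clauses and yields $|\alpha|=n+2$, but when $n=4$ the set $V\cup A_4=\{0,\infty,\pm 1,\pm i\}$ is the octahedron vertex set, so $\mathcal{A}_\alpha\simeq S_4$ and clause~5 fails. For $(0,2,0)$ and $(1,2,0)$ the identity $A_n\cup B_n=A_{2n}$ shows that $\alpha$ is $D_{2n}$-invariant, so clause~2 fails.

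Next, for $k\geq 1$ I would take $\alpha$ as the prescribed combination of $V$, $A_n$, $B_n$ together with $k$ orbits $C_n(z_1),\ldots,C_n(z_k)$ for suitably chosen $z_j$'s. The core step is a genericity argument: each potential overgroup---namely $D_{pn}$ with $p\geq 2$, or a polyhedral group $A_4$, $S_4$, $A_5$ when one sits above $D_n$---forces algebraic conditions on the $z_j$'s that cut out a proper closed subvariety of the parameter space $(\mathbb{C}^*)^k$. Choosing the $z_j$'s outside this subvariety realizes $\mathcal{A}_\alpha=D_n$ with $|\alpha|=2\nu+n(\epsilon+2k)$. Summing these cardinalities across admissible component indices produces the list $\{mn,\ mn+2 : m\geq 1\}$ for $n\geq 3$, $n\neq 4$, and the same list with the value $n+2=6$ removed when $n=4$.

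The $n=2$ case is analogous but now involves three size-$2$ orbits $V, A_2, B_2$. Index $(1,0)$ has $|\alpha|=2$, which forces an infinite stabilizer. For index $(2,0)$, any union of two of $V, A_2, B_2$ is a four-point set whose cross-ratio equals $-1$, hence has stabilizer $D_4\supsetneq K_4$. For index $(3,0)$, $V\cup A_2\cup B_2=\{0,\infty,\pm 1,\pm i\}$ is the full octahedron with stabilizer $S_4$. For $(\nu,k)$ with $k\geq 1$ the same genericity construction realizes $\mathcal{A}_\alpha\simeq K_4$ with $|\alpha|=2\nu+4k$, giving the cardinality set $\{2m : m\geq 2\}$.

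The main obstacle will be the genericity argument: for each potential overgroup one must explicitly identify the algebraic subvariety of $(\mathbb{C}^*)^k$ on which $\alpha$ would inherit extra symmetry, and verify that it is proper. The polyhedral overgroups require the most care, since one must ensure not only that no individual $C_n(z_j)$ is contained in a polyhedral orbit structure, but also that the $C_n(z_j)$'s cannot conspire across $j$ to generate one.
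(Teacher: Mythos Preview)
Your treatment of the $k=0$ rows matches the paper's exactly. The difference lies in the $k\geq 1$ case. The paper does not argue by genericity; instead it exhibits the single explicit set
\[
\alpha=\bigcup_{l=1}^{k} C_n\bigl(e^{\frac{l}{8k^2 n}\,2\pi i}\bigr),
\]
all of whose points lie on the unit circle. Concyclicity immediately disposes of every polyhedral overgroup in one stroke, and the remaining dihedral overgroups $D_{pn}$ are ruled out by an elementary angle-gap estimate: the construction forces the image of $e^{2\pi i/(8k^2 n)}$ under any putative extra rotation to fall into an arc that $\alpha$ avoids. Once the index $(0,0,k)$ (resp.\ $(0,k)$ for $n=2$) is realized, the paper simply adjoins $V$, $A_n$, $B_n$ as needed and remarks that the same argument goes through.

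Your genericity route is also valid, and in some ways more conceptual: it treats all potential overgroups uniformly as cutting out closed conditions and then appeals to the non-emptiness of the complement. The cost is exactly what you flag as the main obstacle---you must verify, case by case, that each bad locus is \emph{proper}, and for the polyhedral overgroups this is a little fiddly since the conditions involve several $z_j$ simultaneously. The paper's concyclic construction sidesteps that bookkeeping entirely, at the price of being less flexible (e.g.\ it does not directly produce examples off the unit circle). Either argument completes the proof; the paper's is shorter to write down, yours is easier to adapt to variants.
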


Suppose that G is isomorphic to a cyclic group. Without loss of generality set $(n\ge2)$
$$
G=\langle
z\mapsto e^{\frac{1}{n}2\pi i}z \rangle\simeq\mathbb{Z}_n.
$$
There are two types of orbits:
$$
N=\{\infty\},\:S=\{0\},:
C_n(z)=\{e^{\frac{k}{n}2\pi i}z|k\in\mathbb{Z}\}
$$
for $z\in\mathbb C^*$ ($N$ and $S$ are in the same type).
\begin{defn}
For any subset $\alpha\subseteq S$ that is a finite union of the orbits of $G$ ($\simeq \mathbb{Z}_n$), define its \emph{(cyclic-$n$) component index} as
$(\nu, k)\in\{0, 1, 2\}\times\mathbb N$, where $\nu, k$ indicate the number of orbit(s) in the $N(S)$ and $C_n$ type, respectively.
\end{defn}
If the stabilizer of an finite subset $\alpha$ equals $G$, $\alpha$ must be a finite union of the orbits of $G$. Conversely, if $\alpha$ is a finite union of the orbits of $G$, its stabilizer $\mathcal{A}_{\alpha}$ includes $G$ as a subgroup. Since $\mathcal{A}_{\alpha}$ is itself a finite group consisting of M\"obius transformations, $\mathcal{A}_{\alpha}$ could be isomorphic to $A_5$ (when $n=2$, $3$ or $5$), $S_4$ (when $n=2$, $3$ or $4$), $A_4$ (when $n=2$ or $3$), $D_q$ for $q\ge2$ (when $n=2$), $D_{pn}$, $\mathbb{Z}_{pn}$ for $p\ge1$. Since the $A_5,\:S_4,\:A_4,\:D_{pn}$ cases have been explored, we shall discuss the case when $\mathcal{A}_{\alpha}$ is isomorphic to ${\mathbb Z}_{pn}$, $p\ge2$. There is only one possible relationship between $G$ and $\mathcal{A}_{\alpha}$:
$\mathcal{A}_{\alpha}=\langle z\mapsto e^{\frac{2\pi i}{pn}}z\rangle$
and it is obvious that
$$
C_{pn}(z)=C_n(z)\cup C_n(e^{\frac{1}{pn}2\pi i}z)\cup C_n(e^{\frac{2}{pn}2\pi i}z)\cup\cdots\cup C_n(e^{\frac{p-1}{pn}2\pi i}z).
$$

\begin{thm}\label{tZ_n}
For any finite subset $\alpha\subseteq S$, $|\alpha|\ge3$, its stabilizer $\mathcal{A}_{\alpha}$ equals $G$ ($\simeq Z_n)$ $(n\ge2)$ if and only if all of the three claims are true:
\begin{enumerate}
\item $\alpha$ is a union of certain elements in
$$
\{\{\infty\},\:\{0\}\}\cup\{C_n(z)|z\in{\mathbb C}^*\};
$$
\item $\alpha$ is NOT a union of certain elements in
$$
\{\{\infty\},\:\{0\}\}\cup\{C_{pn}(z)|z\in{\mathbb C}^*\},\:p\ge2;
$$
\item $\alpha$ is NOT in the icosahedral, octahedral, tetrahedral or dihedral case.
\end{enumerate}
\end{thm}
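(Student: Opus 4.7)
The plan is to prove Theorem \ref{tZ_n} by checking both implications, treating it as a cleanup/assembly of the structural facts already established for the other finite subgroup classes.

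For the forward direction, I assume $\mathcal{A}_\alpha=G\simeq\mathbb{Z}_n$. Condition (1) is immediate: $G\subseteq\mathcal{A}_\alpha$ stabilizes $\alpha$, so $\alpha$ decomposes into $G$-orbits, which by the enumeration just above the statement must be drawn from $\{\{\infty\},\{0\}\}\cup\{C_n(z)\mid z\in\mathbb C^*\}$. For (2), if $\alpha$ were a union of $\mathbb{Z}_{pn}$-orbits for some $p\geq 2$, then the cyclic group of order $pn$ would be contained in $\mathcal{A}_\alpha$, contradicting $|\mathcal{A}_\alpha|=n$. For (3), if $\alpha$ fell into any of the icosahedral, octahedral, tetrahedral, or dihedral cases (per the preceding theorems), then $\mathcal{A}_\alpha$ would contain (hence equal, by Lagrange considerations inside $\mathrm{PSL}(2,\mathbb C)$) one of $A_5,S_4,A_4,D_m$, again contradicting cyclicity.

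For the reverse direction, assume (1)--(3) and that $|\alpha|\geq 3$. From (1) we get $G\subseteq\mathcal{A}_\alpha$, so $\mathcal{A}_\alpha$ is a nontrivial finite subgroup of $\mathrm{PSL}(2,\mathbb C)$. By the classification of such subgroups, $\mathcal{A}_\alpha$ is isomorphic to one of $A_5,S_4,A_4,D_m,\mathbb{Z}_m$. Condition (3) rules out $A_5,S_4,A_4,D_m$, so $\mathcal{A}_\alpha\simeq\mathbb{Z}_m$ for some $m\geq n$, and since $G\subseteq\mathcal{A}_\alpha$ we have $m=pn$ for some $p\geq 1$. The key small step here is to identify $\mathcal{A}_\alpha$ concretely as $\langle z\mapsto e^{2\pi i/(pn)}z\rangle$: any generator $g$ of $\mathcal{A}_\alpha$ has order $pn\geq 2$ and fixes exactly two points on $\widehat{\mathbb C}$; the subgroup $\langle g^p\rangle$ coincides with $G$, whose fixed points are $0$ and $\infty$, and since the fixed-point set of $g$ equals that of any nontrivial power of $g$, $g$ too fixes $\{0,\infty\}$. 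Thus $g(z)=\zeta z$ for some primitive $(pn)$-th root of unity $\zeta$, and up to replacing $g$ by a generator we may take $\zeta=e^{2\pi i/(pn)}$.

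Finally, if $p\geq 2$, then $\alpha$ would be a union of $\mathcal{A}_\alpha$-orbits, i.e.\ a union of elements in $\{\{\infty\},\{0\}\}\cup\{C_{pn}(z)\mid z\in\mathbb C^*\}$, contradicting (2). Hence $p=1$ and $\mathcal{A}_\alpha=G$, completing the proof. The only nontrivial step is the fixed-point argument pinning down the exact form of $\mathcal{A}_\alpha$ (rather than just its abstract isomorphism type), which is the piece that makes condition (2) genuinely effective; everything else is bookkeeping against the classification and the parallel statements already proved for $A_5,S_4,A_4,D_n$.
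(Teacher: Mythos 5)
Your proposal is correct and follows essentially the same route as the paper, whose proof of Theorem \ref{tZ_n} is the informal discussion immediately preceding its statement: decompose $\alpha$ into $G$-orbits, run through the finite subgroups of $\mathrm{PSL}(2,\mathbb C)$ that can contain $\mathbb Z_n$, dispose of the polyhedral and dihedral possibilities via condition (3), and use condition (2) to exclude a larger cyclic stabilizer $\langle z\mapsto e^{2\pi i/(pn)}z\rangle$. Your fixed-point argument simply makes explicit the paper's unproved assertion that ``there is only one possible relationship between $G$ and $\mathcal{A}_\alpha$'' in the cyclic overgroup case, which is a welcome but minor addition.
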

The discussion of the range of possible values for the component index in this case is complicated and left in Section \ref{Z_n}. The results are
\begin{thm}\label{cZ_n}
The existence of $\alpha$ such that $\mathcal{A}_{\alpha}\simeq \mathbb Z_n$, $n\ge2$ with arbitrary element in $\{0, 1, 2\}\times\mathbb N$ as its cyclic-$n$ component index is shown in the table below, along with all possible sizes of $\alpha$ in each circumstances.
$$
\begin{array}{c|c|c|c|c|c|c|c}
\mathrm{Component\:Index}
&(0, 1)&(0, 2)&(1, 1)&(1, 2)&(2, 1)&(2, 2)
&(\nu, k),\:k\ge3\\\hline
n\ge4&\mathrm{Nay}&\mathrm{Nay}
&\mathrm{Yea\:}(1+n)&\mathrm{Yea\:}(1+2n)
&\mathrm{Nay}&\mathrm{Nay}
&\mathrm{Yea\:}(\nu+nk)\\\hline
n=2, 3&\mathrm{Nay}&\mathrm{Nay}
&\mathrm{Nay}&\mathrm{Yea\:}(1+2n)
&\mathrm{Nay}&\mathrm{Nay}
&\mathrm{Yea\:}(\nu+2k)
\end{array}
$$
$$
\{|\alpha|<\infty\:|\:\mathcal{A}_{\alpha}\simeq \mathbb Z_n\}=
\{x+nk\:|\:x=0, 1, 2,\:k\ge3,\}\cup\{1+n, 1+2n\},\:n\ge4.
$$
$$
\{|\alpha|<\infty\:|\:\mathcal{A}_{\alpha}\simeq \mathbb Z_n\}=
\{x+nk\:|\:x=0, 1, 2,\:k\ge3,\}\cup\{1+2n\},\:n=2, 3.
$$
\end{thm}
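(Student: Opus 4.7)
The plan is to split the theorem into three parts: (i) verifying the ``Nay'' entries, (ii) constructing explicit witnesses for the ``Yea'' entries together with their prescribed cardinalities, and (iii) collecting the Yea pairs to read off the two displayed cardinality sets. Throughout I use that $\mathcal{A}_\alpha$ is a finite subgroup of $\mathrm{PSL}(2,\mathbb C)$ containing $G\simeq\mathbb Z_n$, so by the classification only $\mathbb Z_{pn}$, $D_{pn}$ ($p\geq 1$), $A_4$, $S_4$, and $A_5$ need be excluded as strict enlargements.

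For the Nay entries I would exploit the involution $f_c(z)=c/z$, which normalizes $G=\langle z\mapsto e^{2\pi i/n}z\rangle$ and sends $C_n(z_0)$ to $C_n(c/z_0)$. Taking $c=z_0^2$ for $(\nu,k)=(0,1)$, $c=z_1z_2$ for $(0,2)$, and analogous choices for $(2,1)$ and $(2,2)$ (where $f_c$ additionally swaps $0\leftrightarrow\infty$) produces in each case an involution outside $G$ that preserves $\alpha$, forcing $\mathcal{A}_\alpha\supseteq D_n$. The two remaining Nay entries, namely $(1,1)$ for $n=2$ and for $n=3$, follow from small-cardinality collapse: any $3$-point set has stabilizer $S_3\simeq D_3$, and any $4$-point set has the Klein four-group of double transpositions in its stabilizer by cross-ratio invariance, which together with the order-$3$ rotation already forces $A_4\supsetneq\mathbb Z_3$.

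For the Yea entries I would build explicit witnesses by choosing $z_1,\dots,z_k\in\mathbb C^\ast$ generically. Each potential enlargement imposes an algebraic condition of codimension at least one on the $z_i$: the enlargement $\mathbb Z_{pn}$ is ruled out because each $C_n$-orbit has size $n<pn$ and so cannot decompose into $\mathbb Z_{pn}$-orbits; the enlargement $D_{pn}$ demands an involution $f_c$ with $c/z_i$ lying in some $C_n(z_j)$, imposing finitely many codimension-one equations; and the polyhedral enlargements are possible only in finitely many $(n,\nu,k)$ compatible with the orbit-size list $4,6,8,12,20,30$ of $A_4,S_4,A_5$, each such case being avoidable by generic choice (or confirmed as a Nay, as above). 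The cardinality $|\alpha|=\nu+nk$ is then immediate, and the two displayed cardinality sets are obtained by taking the union of $\nu+nk$ over the Yea pairs.

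The main obstacle I anticipate is the existence step at small $k$ for $n\in\{2,3,4,5\}$, where the polyhedral groups of orders $12,24,60$ do contain cyclic subgroups of these orders and where their orbit counts can, in a handful of cases, coincide numerically with $\nu+nk$. In these cases one must carefully exhibit a non-polyhedral example, by checking that the polyhedral configuration is cut out by a proper algebraic subvariety of the $z_i$-parameter space. The Nay arguments by contrast collapse in each case either to a single explicit involution $f_c$ or to the $3$- and $4$-point cross-ratio argument, so I expect the technical burden to sit entirely in the genericity verifications.
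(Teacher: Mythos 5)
Your treatment of the ``Nay'' entries is sound and in fact more explicit than the paper's: the involution $f_c(z)=c/z$ with $c=z_0^2$ or $c=z_1z_2$ does force a dihedral overgroup for the indices $(0,1),(0,2),(2,1),(2,2)$, and the $3$-point and cross-ratio arguments dispose of $(1,1)$ for $n=2,3$ exactly as the paper's appeal to ``observation'' intends. The existence (``Yea'') half, however, has genuine gaps. First, your unconditional claim that the enlargement $\mathbb Z_{pn}$ is impossible ``because each $C_n$-orbit has size $n<pn$'' is false: a union of $p$ distinct $C_n$-orbits can amalgamate into a single $C_{pn}$-orbit (e.g.\ $C_2(1)\cup C_2(i)=C_4(1)$), so only a genericity statement is available, and that statement itself still needs proof. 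Second, for $n=2$ your list of possible overgroups is incomplete: a group containing the fixed copy of $\mathbb Z_2$ can be $D_q$ for \emph{every} $q\ge2$, with the involution sitting as a reflection about an axis perpendicular to that of $G$, not only $D_{2p}$ with the same axis; the paper states this explicitly, and its entire $n=2$ proposition (the set $\{0,\pm1,\dots,\pm k\}$) is devoted to excluding precisely these groups via concyclicity and an explicit rotation computation. Your own $(1,1)$, $n=2$ example, whose stabilizer is $D_3$, shows such enlargements really occur, yet your generic-choice framework never excludes them for the Yea constructions.

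Third, genericity cannot carry the small Yea cases at all: for index $(1,1)$ with $n\ge4$ every admissible configuration is M\"obius-equivalent to $\{0\}\cup C_n(1)$ (normalize $z_0=1$ by a scaling commuting with $G$), so there is no parameter to choose generically and one must verify directly that this specific set admits no extra symmetry; the same issue arises for $(1,2)$. More fundamentally, to know that each ``bad'' locus is a \emph{proper} subvariety you must exhibit at least one configuration with stabilizer exactly $\mathbb Z_n$ in each case, which is the content of the theorem itself; your proposal defers this to ``genericity verifications'' whereas the paper discharges it by writing down explicit real configurations, namely $\bigcup_{l=1}^{k}C_n(l)$, this union with $0$ and with $0,\infty$ adjoined, and $\{0,\pm1,\dots,\pm k\}$, $\{\pm\tfrac12,\dots,\pm\tfrac{2k-1}2\}$ for $n=2$, and checking by hand (concyclicity to exclude the polyhedral groups, explicit elliptic elements to exclude cyclic and all dihedral overgroups) that their stabilizers are exactly $\mathbb Z_n$. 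Until those verifications are supplied, the existence half of the table and the two cardinality sets are not established.
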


\subsection{The Classification Algorithm}
\label{algorithm}

We need another theorem (proved in Subsection\ref{trivial}) to complete our classification algorithm:
\begin{thm}\label{ctrivial}
Suppose that $\mathcal{A}_{\alpha}$ is the trivial group and $\alpha$ is finite. Then the range of possible values for $|\alpha|$ is
$$
\{|\alpha|<\infty\:|\:\mathcal{A}_{\alpha}\:\mathrm{trivial}\}=
\{5,\:6,\:7\:\cdots\}.
$$
\end{thm}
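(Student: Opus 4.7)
The plan is to prove the two inclusions separately: (i) if $\mathcal{A}_\alpha$ is trivial and $\alpha$ is finite then $|\alpha| \ge 5$, and (ii) for every integer $n \ge 5$ there exists a finite $\alpha \subseteq \widehat{\mathbb{C}}$ with $|\alpha| = n$ and $\mathcal{A}_\alpha$ trivial.

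For direction (i) I will split on $|\alpha|$. The cases $|\alpha| \le 2$ are immediate: the setwise stabilizer in $\mathrm{PSL}(2,\mathbb{C})$ of one or two points on $\widehat{\mathbb{C}}$ is a positive-dimensional subgroup, hence $\mathcal{A}_\alpha$ is infinite and in particular non-trivial. For $|\alpha| = 3$, sharp $3$-transitivity of $\mathrm{PSL}(2,\mathbb{C})$ furnishes a unique M\"obius transformation realising each of the $6$ permutations of the three points, so $\mathcal{A}_\alpha \simeq S_3$. For $|\alpha| = 4$ I will normalise to $\alpha = \{0,1,\infty,\lambda\}$ by an ambient M\"obius change of coordinates (which conjugates $\mathcal{A}_\alpha$ and hence preserves triviality/non-triviality) and exhibit the involution $z \mapsto \lambda/z$, which swaps $0 \leftrightarrow \infty$ and $1 \leftrightarrow \lambda$ and is thus a non-trivial element of $\mathcal{A}_\alpha$.

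For direction (ii) I will use the orbifold picture from Section~\ref{subsec: orbifold}. The isomorphism $G_n \simeq S_n$ established earlier makes the $S_n$-action on $K_n \subseteq \mathbb{C}^{n-3}$ faithful, and the isomorphism $G_{\boldsymbol{\lambda}} \simeq \mathcal{A}_{[\boldsymbol{\lambda}]}$ identifies the $S_n$-stabilizer of a point $\boldsymbol{\lambda}$ with the M\"obius stabilizer of the associated $n$-point set $[\boldsymbol{\lambda}]$. For each non-identity $\sigma \in S_n$ the fixed locus $K_n^\sigma = \{\boldsymbol{\lambda} \in K_n : \sigma \cdot \boldsymbol{\lambda} = \boldsymbol{\lambda}\}$ is Zariski-closed in $K_n$ and \emph{proper}, because $K_n$ is irreducible of dimension $n - 3 \ge 2$ while $\sigma$ acts non-trivially. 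Since only $n!-1$ such $\sigma$ occur, the union $\bigcup_{\sigma \ne e} K_n^\sigma$ is a proper Zariski-closed subset of $K_n$, and any $\boldsymbol{\lambda}$ in its complement yields $\alpha = [\boldsymbol{\lambda}]$ of cardinality $n$ with $\mathcal{A}_\alpha$ trivial.

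No single step presents a serious obstacle. The one point that must be handled carefully is the properness of each $K_n^\sigma$, which rests on the faithfulness of the $S_n$-action on $K_n$ combined with the irreducibility of $K_n$ as a Zariski-open subset of $\mathbb{C}^{n-3}$; the remainder consists of the elementary case split in (i) and a standard genericity statement in (ii), so the full proof should be quite short.
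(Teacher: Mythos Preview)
Your proof is correct. Part~(i) matches the paper's Subsection~\ref{trivial} almost exactly: the paper also disposes of $|\alpha|=2,3,4$ case by case, handling $|\alpha|=4$ via cross-ratio identities where you instead exhibit the involution $z\mapsto\lambda/z$ directly---these are two ways of saying the same thing.

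Part~(ii) is genuinely different. The paper produces an explicit witness for each $n\ge 5$: it takes $\alpha = C_{n-1}(1)\cup\{2\}$, the $(n-1)$-th roots of unity together with the point $2$, and shows that any $f\in\mathcal{A}_\alpha$ must send at least three points of $C_{n-1}(1)$ back into $C_{n-1}(1)$, hence preserve the unit circle, hence lie in the dihedral group stabilizing $C_{n-1}(1)$; but such an $f$ also fixes $2$, forcing $f=\mathrm{id}$. Your argument is non-constructive: faithfulness of the $S_n$-action on the irreducible variety $K_n$ forces each $K_n^\sigma$ to be a proper closed subset, so a generic $\boldsymbol\lambda$ has trivial stabilizer. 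This is a clean use of the moduli framework of Section~\ref{orbifold} and avoids any case analysis, at the cost of losing the concrete examples. (Minor remark: the dimension bound $n-3\ge 2$ you mention is not actually needed---irreducibility of $K_n$ together with faithfulness already gives properness of each $K_n^\sigma$ and of their finite union.)
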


From these results we obtain the algorithm for the classification. (The reader can find the code for this algorithm written in C programming language in Section \ref{code}). Input the cardinality of a finite subset ${\alpha}$ of the extended complex plane, the algorithm will output all possible group structure(s) of its stabilizer $\mathcal{A}_{\alpha}$, along with its component index(es). Notice that this is a classification for stabilizer of finite subsets of the extended complex plane, and only applies to the orbifold singularities of $\mathfrak{M_{0,\:n}}$ when $n\ge5$. (See the remark following Corollary \ref{34}).

\begin{verbatim}
n=input(|\alpha|);

if(n<3){output{infinity}};

k=\floor(n/60), r=n-60k;
if(r=0&k>0){output{A_5, (0, 0, 0, k)}};
if(r=12){output{A_5, (1, 0, 0, k)}};
if(r=20){output{A_5, (0, 1, 0, k)}};
if(r=30){output{A_5, (0, 0, 1, k)}};
if(r=32){output{A_5, (1, 1, 0, k)}};
if(r=42){output{A_5, (1, 0, 1, k)}};
if(r=50){output{A_5, (0, 1, 1, k)}};
if(r=2&k>0){output{A_5, (1, 1, 1, k-1)}};

k=\floor(n/24), r=n-24k;
if(r=0&k>0){output{S_4, (0, 0, 0, k)}};
if(r=6){output{S_4, (1, 0, 0, k)}};
if(r=8){output{S_4, (0, 1, 0, k)}};
if(r=12){output{S_4, (0, 0, 1, k)}};
if(r=14){output{S_4, (1, 1, 0, k)}};
if(r=18){output{S_4, (1, 0, 1, k)}};
if(r=20){output{S_4, (0, 1, 1, k)}};
if(r=2&k>0){output{S_4, (1, 1, 1, k-1)}};

k=\floor(n/12), r=n-12k;
if(r=0&k>0){output{A_4, (0, 0, k)}};
if(r=4){output{A_4, (1, 0, k)}};
if(r=8$k>0){output{A_4, (2, 0, k)}};
if(r=6&k>0){output{A_4, (0, 1, k)}};
if(r=10){output{A_4, (1, 1, k)}};
if(r=2&k>1){output{A_4, (2, 1, k-1)}};

p=n;
while(p>2)
begin
k=\floor(n/2p), l=\floor(n/p)-2k, r=n-2pk-pl;
if(k>0&r=0){output{D_p, (0, l, k)}};
if(k>1&l=0&r=0){output{D_p, (0, 2, k-1)}};
if(k>0&r=2){output{D_p, (1, l, k)}};
if(k>1&l=0&r=2){output{D_p, (1, 2, k-1)}};
if(k=0&l=1&r=0){output{D_p, (0, 1, 0)}};
if(k=0&l=1&r=2&p!=4){output{D_p, (1, 1, 0)}};
p=p-1;
end

k=\floor(n/4), r=n-4k;
if(k>0&r=0){output{K_4, (0, k)}};
if(k>1&r=0){output{K_4, (2, k-1)}};
if(k>0&r=2){output{K_4, (1, k)}};
if(k>1&r=2){output{K_4, (3, k-1)}};

p=n;
while(p>2)
begin
k=\floor(n/p), r=n-pk;
if(k>2&r<3){output{Z_p, (r, k)}};
if(k=2&r=1){output{Z_p, (r, k)}};
if(k=1&r=1&p!=3){output{Z_p, (r, k)}};
p=p-1;
end

k=\floor(n/2), r=n-2k;
if(k>2){output{Z_2, (r, k)}};
if(k>3&r=0){output{Z_2, (2, k-1)}};
if(k=2&r=1){output{Z_2, (r, k)}};

if(n>4){output{(0)}};
\end{verbatim}

There are some interesting observations:
\begin{thm}
For any finite group $G$ of linear fractional transformations, there exists a finite subset $\alpha\subseteq\widehat{\mathbb{C}}$, $|\alpha|\ge5$ such that $\mathcal{A}_{\alpha}=G$.
\end{thm}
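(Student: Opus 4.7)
The plan is to reduce the statement to the classification results already proved in the previous subsections. By the standard classification, every finite subgroup of $\mathrm{PSL}(2,\mathbb{C})$ is conjugate to one of the following \emph{standard models}: the trivial group, a cyclic group $\mathbb{Z}_n$ with $n\ge2$, a dihedral group $D_n$ with $n\ge2$, or one of the polyhedral groups $A_4$, $S_4$, $A_5$ (as normalized in Subsection \ref{subsec:subsets}). Observe that if $\alpha_0\subseteq\widehat{\mathbb{C}}$ satisfies $\mathcal{A}_{\alpha_0}=G_0$ for a standard model $G_0$, and if $G=\sigma G_0\sigma^{-1}$ for some $\sigma\in\mathrm{PSL}(2,\mathbb{C})$, then $\alpha:=\sigma(\alpha_0)$ satisfies $\mathcal{A}_{\alpha}=\sigma\mathcal{A}_{\alpha_0}\sigma^{-1}=G$ and $|\alpha|=|\alpha_0|$. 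Hence it suffices to exhibit, for each standard model $G_0$, a finite subset $\alpha_0$ with $\mathcal{A}_{\alpha_0}=G_0$ and $|\alpha_0|\ge5$.

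I would then run through the cases, in each case invoking the cardinality list produced by the corresponding theorem in Subsection \ref{subsec:subsets}. For $G_0=A_5$, the cardinality set (Corollary after the $A_5$/$S_4$ theorem) already contains only integers $\ge 12$, so any choice works; for $G_0=S_4$ it contains $6$, which is $\ge5$; for $G_0=A_4$, Theorem on $A_4$ supplies cardinality $10$ (via the index $(1,1,0)$), and indeed every allowed cardinality except $4$ is $\ge5$. For $G_0=D_n$ with $n\ge3$ and $n\ne 4$, Theorem \ref{cD_n} gives cardinality $n+2\ge5$ (via index $(1,1,0)$) or, if one prefers, $2n\ge6$ (via $(0,0,1)$); for $G_0=D_4$, cardinality $8$ is available via $(0,0,2)$; for $G_0=K_4$, cardinality $8$ is available via $(0,2)$. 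For $G_0=\mathbb{Z}_n$ with $n\ge4$, Theorem \ref{cZ_n} gives cardinality $1+n\ge5$; for $n=3$, it gives $1+2n=7\ge5$; for $n=2$, it gives $1+2n=5$. Finally, for the trivial group Theorem \ref{ctrivial} allows $|\alpha_0|=5$.

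The only real obstacle is bookkeeping: one must check that the lower bound $|\alpha|\ge 5$ is actually attainable in each line of the classification, since a few small cardinalities ($1$, $4$ for $A_4$, $4$ for $D_4$, $2$ for $K_4$) are either impossible or forbidden. The verification above, done case by case against the tables in Theorems on $A_5$/$S_4$, on $A_4$, Theorem \ref{cD_n}, Theorem \ref{cZ_n}, and Theorem \ref{ctrivial}, shows that a witness of size $\ge 5$ always exists, completing the proof after the conjugation step.
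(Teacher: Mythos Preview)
Your argument is correct and is precisely the approach the paper intends: the theorem is stated there without a separate proof because it is meant to follow immediately from the cardinality tables in the preceding subsections, and your case-by-case reading of those tables together with the conjugation reduction makes that explicit. One trivial bookkeeping slip: for $D_4$ the witness of size $8$ comes from the component index $(0,0,1)$ (since $2\nu+n(\epsilon+2k)=8$ there), not $(0,0,2)$, though this does not affect the conclusion.
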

\begin{cor}
For each finite subgroup $G$ of $\text{PSL}(2,\:\mathbb C)$, there exists some oribfold singularity of $\mathfrak{M_{0,\:n}}$ for some $n(\ge5)$ whose stabilizer is $G$.
\end{cor}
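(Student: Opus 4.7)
The approach is a finite case-by-case verification: up to conjugacy in $\mathrm{PSL}(2,\mathbb{C})$ the finite groups $G$ of Möbius transformations are the trivial group, the cyclic groups $\mathbb{Z}_n$ ($n\geq 2$), the dihedral groups $D_n$ ($n\geq 2$), and the three polyhedral groups $A_4$, $S_4$, $A_5$. For each class I plan to invoke the cardinality description already established in Subsection \ref{subsec:subsets} and pick out a value $|\alpha|\geq 5$; since any two conjugate stabilizers give the same set of realizable $|\alpha|$ via Möbius conjugation of $\alpha$, it suffices to handle one representative of each conjugacy class.

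For the polyhedral cases I simply read off the cardinality lists: the corollary in Subsection \ref{subsec:subsets} furnishes $|\alpha|=12$ for $A_5$ (take $V$, the dodecahedral vertex orbit) and $|\alpha|=6$ for $S_4$ (the vertex orbit of the octahedron), while the tetrahedral theorem provides $|\alpha|=10$ for $A_4$ via component index $(1,1,0)$. For the dihedral family I apply Theorem \ref{cD_n}: for $n\geq 3,\ n\neq 4$ the entry $(\nu,\epsilon,k)=(1,1,0)$ gives $|\alpha|=n+2\geq 5$; for $n=4$ the entry $(0,0,1)$ gives $|\alpha|=8$; for $K_4=D_2$ the entry $(\nu,k)=(1,1)$ gives $|\alpha|=6$. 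For the cyclic family Theorem \ref{cZ_n} provides $|\alpha|=n+1\geq 5$ when $n\geq 4$ via index $(1,1)$, and $|\alpha|=2n+1\in\{5,7\}$ when $n=2,3$ via index $(1,2)$. Theorem \ref{ctrivial} settles the trivial case with $|\alpha|=5$. In every instance the realization $\alpha$ comes from an explicit construction already recorded in the relevant theorem, so no new geometric work is needed.

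The corollary is then immediate: by the reduction established in Subsection \ref{subsec: orbifold}, the orbifold stabilizers of $\mathfrak{M_{0,n}}$ are precisely the groups $\mathcal{A}_{[\boldsymbol{\lambda}]}$ for $n$-point subsets of $\widehat{\mathbb{C}}$, so exhibiting each finite $G\leq\mathrm{PSL}(2,\mathbb{C})$ as some $\mathcal{A}_\alpha$ with $n:=|\alpha|\geq 5$ automatically produces an orbifold singularity of $\mathfrak{M_{0,n}}$ with stabilizer $G$. The only subtlety (and hence the sole "obstacle") is that for a handful of small-parameter cases — $A_4$ (whose smallest realization has $|\alpha|=4$), $K_4$, $D_4$, $\mathbb{Z}_2$, and $\mathbb{Z}_3$ — one cannot use the nominally minimal realizable cardinality but must select a slightly larger component index from the tables to meet the threshold $|\alpha|\geq 5$; the list of component indices proved admissible in Theorems 8, \ref{cD_n}, and \ref{cZ_n} ensures that such a choice is always available.
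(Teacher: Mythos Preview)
Your proposal is correct and follows the same route the paper takes (indeed, it is more explicit): the paper records this corollary without a separate proof, as an immediate consequence of the preceding theorem, which in turn is read off from the cardinality tables in Subsection \ref{subsec:subsets}. Your case-by-case selection of component indices for each conjugacy type, together with the conjugation argument and the reduction of Subsection \ref{subsec: orbifold}, is exactly the intended verification.
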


\section{The Moduli Space $\mathfrak{M_{0,\:n}}$}
\label{orbifold}

\subsection{The Orbifold Structure of Moduli Space $\mathfrak{M_{0,\:n}}$}
\label{orbif}

For $n\in\mathbb Z_{\ge4}$, set
$$
K_n=\{\boldsymbol{\lambda}=(\lambda^1,\:\lambda^2,\:\cdots,\:\lambda^{n-3})\in\mathbb C^{n-3}|\lambda^i\ne0,\:1,\:\lambda^i\ne\lambda^j,\:\forall i,\:j=1,\:2,\:\cdots,\:n-3,\:i\ne j\}.
$$
For any $\boldsymbol{\lambda}\in K_n$, set
$$
z^{\boldsymbol{\lambda}}_1=0,\:z^{\boldsymbol{\lambda}}_2=1,\:z^{\boldsymbol{\lambda}}_3=\infty,\:z^{\boldsymbol{\lambda}}_{i+3}=\lambda^{i},\:i=1,\:2,\:\cdots,\:n-3.
$$
For $\boldsymbol{\lambda}\in K_n$ and $\sigma\in S_n$, define $f^{\boldsymbol {\lambda}}_\sigma$ as the M\"obius transformation such that
$$
f^{\boldsymbol {\lambda}}_\sigma (z^{\boldsymbol{\lambda}}_{\sigma^{-1}(1)})=0,\:f^{\boldsymbol {\lambda}}_\sigma (z^{\boldsymbol{\lambda}}_{\sigma^{-1}(2)})=1,\:f^{\boldsymbol {\lambda}}_\sigma (z^{\boldsymbol{\lambda}}_{\sigma^{-1}(3)})=\infty.
$$
Now we define a mapping $g_\sigma$ of $K_n$ for each $\sigma \in S_n$
$$
g_\sigma :\:K_n\to K_n,\:\boldsymbol{\lambda}\mapsto(f^{\boldsymbol {\lambda}}_\sigma (z^{\boldsymbol{\lambda}}_{\sigma^{-1}(4)}),\:f^{\boldsymbol {\lambda}}_\sigma (z^{\boldsymbol{\lambda}}_{\sigma^{-1}(5)}),\:\cdots,\:f^{\boldsymbol {\lambda}}_\sigma (z^{\boldsymbol{\lambda}}_{\sigma^{-1}(n)})).
$$
By definition $g_\sigma$ is a rational map from $\mathbb C^{n-3}$ to $\mathbb C^{n-3}$ which restricts to an isomorphism of $K_n$. Here is a useful observation:
\begin{rem}
For any $\boldsymbol{\lambda}\in K_n$ and $\sigma\in S_n$, we have
$$
f^{\boldsymbol {\lambda}}_\sigma (z^{\boldsymbol{\lambda}}_k)=z^{g_\sigma(\boldsymbol{\lambda})}_{\sigma(k)}
$$
holds for $k=1,\:2,\:\cdots,\:n$.
\end{rem}
Define $G_n=\{g_\sigma|\sigma\in S_n\}$. Notice that $|G_n|\le n!$ and
\begin{thm}
$G_n$ is a finite group acting on $K_n$ when $n\ge4$.
\end{thm}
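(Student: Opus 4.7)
The plan is to exhibit the map $\sigma \mapsto g_\sigma$ as a group homomorphism from $S_n$ into the group of birational self-maps of $\mathbb{C}^{n-3}$ whose restrictions are automorphisms of $K_n$. Once this is done, $G_n$ is the image of $S_n$ under this homomorphism, hence automatically a group, and finite since $|G_n|\le n!$.

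First I would dispense with the identity: when $\sigma = e$, the Möbius transformation $f^{\boldsymbol\lambda}_e$ is required to fix $0$, $1$, and $\infty$, hence $f^{\boldsymbol\lambda}_e=\mathrm{id}$, which gives $g_e(\boldsymbol\lambda)=\boldsymbol\lambda$. The heart of the argument is the composition rule
\[
g_{\sigma\tau} \;=\; g_\sigma \circ g_\tau \qquad \text{for all } \sigma,\tau\in S_n.
\]
To establish this, I would consider the two Möbius transformations $f^{\boldsymbol\lambda}_{\sigma\tau}$ and $f^{g_\tau(\boldsymbol\lambda)}_\sigma \circ f^{\boldsymbol\lambda}_\tau$. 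Applying the Remark twice, the composite satisfies
\[
f^{g_\tau(\boldsymbol\lambda)}_\sigma\!\left(f^{\boldsymbol\lambda}_\tau\!\left(z^{\boldsymbol\lambda}_k\right)\right)
= f^{g_\tau(\boldsymbol\lambda)}_\sigma\!\left(z^{g_\tau(\boldsymbol\lambda)}_{\tau(k)}\right)
= z^{g_\sigma(g_\tau(\boldsymbol\lambda))}_{\sigma\tau(k)},
\]
while $f^{\boldsymbol\lambda}_{\sigma\tau}(z^{\boldsymbol\lambda}_k) = z^{g_{\sigma\tau}(\boldsymbol\lambda)}_{\sigma\tau(k)}$. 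Taking $k = (\sigma\tau)^{-1}(j)$ for $j=1,2,3$ shows that both Möbius transformations send the same three input points to $0,1,\infty$ respectively; since a Möbius transformation is determined by its values at three points, they are equal.

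With that equality in hand, reading off the equation at $k=(\sigma\tau)^{-1}(j)$ for $j=4,\dots,n$ yields $z^{g_{\sigma\tau}(\boldsymbol\lambda)}_j = z^{g_\sigma(g_\tau(\boldsymbol\lambda))}_j$, i.e.\ the tuples $g_{\sigma\tau}(\boldsymbol\lambda)$ and $g_\sigma(g_\tau(\boldsymbol\lambda))$ have identical coordinates. Hence $g_{\sigma\tau} = g_\sigma \circ g_\tau$ as maps $K_n\to K_n$, proving $\sigma\mapsto g_\sigma$ is a group homomorphism. Therefore $G_n$ is closed under composition and each $g_\sigma$ has inverse $g_{\sigma^{-1}}$, so $G_n$ is a subgroup of the automorphism group of $K_n$. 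Combined with $|G_n|\le n!$ and the already-noted fact that each $g_\sigma$ restricts to an isomorphism of $K_n$, this gives the desired conclusion.

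The only real subtlety is careful bookkeeping of the indices $\sigma(k)$ versus $\sigma^{-1}(k)$ in the defining formulas for $f^{\boldsymbol\lambda}_\sigma$ and $g_\sigma$; the proof itself is forced once one recognizes that the Remark turns the composition law into a three-point uniqueness statement for Möbius transformations. I anticipate no genuine obstacle beyond this notational check.
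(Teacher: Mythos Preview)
Your proposal is correct and follows essentially the same approach as the paper's proof: both establish the composition law $g_{\pi\sigma}=g_\pi\circ g_\sigma$ by applying the Remark twice to the composite $f^{g_\sigma(\boldsymbol\lambda)}_\pi\circ f^{\boldsymbol\lambda}_\sigma$, then invoke the three-point determination of M\"obius transformations to identify it with $f^{\boldsymbol\lambda}_{\pi\sigma}$ and read off equality of the remaining coordinates. Your write-up is slightly more explicit about the uniqueness step, but the argument is the same.
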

\begin{proof}
It is easy to check that
$$
f^{g_\sigma(\boldsymbol {\lambda})}_\pi\circ f^{\boldsymbol {\lambda}}_\sigma (z^{\boldsymbol{\lambda}}_{(\pi\cdot\sigma)^{-1}(k)})=f^{g_\sigma(\boldsymbol {\lambda})}_\pi(z^{g_\sigma(\boldsymbol{\lambda})}_{\pi^{-1}(k)})=z^{g_\pi\circ g_\sigma(\boldsymbol {\lambda})}_k,\:k=1,\:2,\:\cdots,\:n.
$$
Thus we conclude that
$$
f^{g_\sigma(\boldsymbol {\lambda})}_\pi\circ f^{\boldsymbol {\lambda}}_\sigma =f^{\boldsymbol {\lambda}}_{\pi\cdot\sigma},\:z^{g_\pi\circ g_\sigma(\boldsymbol {\lambda})}_k=z_k^{g_{\pi\cdot\sigma}(\boldsymbol {\lambda})},\:k=1,\:2,\:\cdots,\:n.
$$
So $g_\pi\circ g_\sigma=g_{\pi\cdot\sigma}\in G_n$. Thus $G_n$ is a group acting on $K_n$.
\end{proof}

As $G_n$ is a finite group acting on $K_n$, we have another conclusion
\begin{thm}
$\mathfrak{M_{0,\:n}}\simeq K_n/G_n$ as orbitfolds when $n\ge4$.
\end{thm}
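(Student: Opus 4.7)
The plan is to promote the set-theoretic identification to an isomorphism of orbifolds by constructing an explicit map from $K_n$ onto $\mathfrak{M_{0,\:n}}$ whose fibers are precisely the $G_n$-orbits. Define
\[
\Phi:K_n\longrightarrow \mathfrak{M_{0,\:n}},\qquad \boldsymbol{\lambda}\longmapsto \bigl[\bigl(\widehat{\mathbb C},\{z^{\boldsymbol{\lambda}}_1,\ldots,z^{\boldsymbol{\lambda}}_n\}\bigr)\bigr].
\]
Surjectivity of $\Phi$ is the classical normalization: given any representative $(\widehat{\mathbb C},\{p_1,\ldots,p_n\})$, choose three of the $p_i$'s and apply the unique M\"obius transformation sending them to $0,1,\infty$; the remaining $n-3$ images are pairwise distinct and avoid $\{0,1,\infty\}$, hence yield an element of $K_n$ whose image under $\Phi$ is the given class.

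Next I would show that $\Phi$ is constant on $G_n$-orbits and separates distinct orbits. For $G_n$-invariance, suppose $\boldsymbol{\mu}=g_\sigma(\boldsymbol{\lambda})$. The remark preceding the theorem gives $f^{\boldsymbol{\lambda}}_\sigma(z^{\boldsymbol{\lambda}}_k)=z^{\boldsymbol{\mu}}_{\sigma(k)}$ for every $k$, so $f^{\boldsymbol{\lambda}}_\sigma$ is a biholomorphism of $\widehat{\mathbb C}$ carrying the unordered set $\{z^{\boldsymbol{\lambda}}_k\}_{k=1}^n$ onto $\{z^{\boldsymbol{\mu}}_k\}_{k=1}^n$, whence $\Phi(\boldsymbol{\lambda})=\Phi(\boldsymbol{\mu})$. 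For injectivity modulo $G_n$, suppose $\Phi(\boldsymbol{\lambda})=\Phi(\boldsymbol{\mu})$ and pick a M\"obius transformation $h$ with $h(\{z^{\boldsymbol{\lambda}}_k\})=\{z^{\boldsymbol{\mu}}_k\}$ as unordered sets. Define $\sigma\in S_n$ by $h(z^{\boldsymbol{\lambda}}_k)=z^{\boldsymbol{\mu}}_{\sigma(k)}$. Since $h$ sends $z^{\boldsymbol{\lambda}}_{\sigma^{-1}(1)},z^{\boldsymbol{\lambda}}_{\sigma^{-1}(2)},z^{\boldsymbol{\lambda}}_{\sigma^{-1}(3)}$ to $0,1,\infty$, the uniqueness of M\"obius transformations on three prescribed points forces $h=f^{\boldsymbol{\lambda}}_\sigma$. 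Reading off coordinates $i=4,\ldots,n$ then gives
\[
z^{\boldsymbol{\mu}}_i=f^{\boldsymbol{\lambda}}_\sigma\bigl(z^{\boldsymbol{\lambda}}_{\sigma^{-1}(i)}\bigr),
\]
which is exactly the $(i-3)$-th coordinate of $g_\sigma(\boldsymbol{\lambda})$. Hence $\boldsymbol{\mu}=g_\sigma(\boldsymbol{\lambda})$, and $\overline{\Phi}:K_n/G_n\to\mathfrak{M_{0,\:n}}$ is a bijection.

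It remains to upgrade $\overline{\Phi}$ to an orbifold isomorphism. Around any $\boldsymbol{\lambda}_0\in K_n$ with stabilizer $G_{\boldsymbol{\lambda}_0}\le G_n$, the quotient orbifold chart is a small $G_{\boldsymbol{\lambda}_0}$-invariant Euclidean neighborhood $U\subseteq K_n$ modulo $G_{\boldsymbol{\lambda}_0}$. On the moduli side, the orbifold chart at $\Phi(\boldsymbol{\lambda}_0)$ is given by the deformations of the marked configuration modulo its automorphism group, i.e., by varying the $n-3$ mobile points while three of them are held at $0,1,\infty$ and quotienting by the finite group of M\"obius transformations preserving the configuration. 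By the injectivity argument above this second group is identified with $G_{\boldsymbol{\lambda}_0}$ acting precisely as the restriction of $G_n$ to $U$, so the two charts agree tautologically. I would finish by invoking this local comparison at every point.

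The step I expect to be the genuine obstacle is the last one: making the identification of local orbifold charts rigorous without circularity, since Corollary \ref{34} has already asserted that $\mathfrak{M_{0,\:n}}$ is an orbifold of the asserted form. The cleanest route is to take the right-hand side $K_n/G_n$ as the \emph{definition} of the orbifold structure on $\mathfrak{M_{0,\:n}}$ (consistent with \cite{Mu-Pe}) and verify that the bijection $\overline{\Phi}$ intertwines this definition with the functor-of-points description of $\mathfrak{M_{0,\:n}}$ as isomorphism classes of $n$-pointed genus-zero curves; every remaining check, including continuity of the chart-forming operation ``apply $f^{\boldsymbol{\lambda}}_\sigma$,'' is a routine consequence of the rationality of $g_\sigma$ established in the previous theorem.
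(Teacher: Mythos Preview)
Your set-theoretic argument is essentially identical to the paper's: you define the natural map $\Phi$, and your two directions for showing that its fibers are exactly the $G_n$-orbits match the paper's computations almost line for line (the paper does the converse direction by writing $h(z^{\boldsymbol{\lambda}}_{\sigma^{-1}(k)})=z^{\boldsymbol{\mu}}_k$, which is your $\sigma$ relabeled). The only addition you make is an explicit sentence on surjectivity, which the paper leaves tacit.

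Where you diverge is in worrying about the phrase ``as orbifolds.'' The paper's proof does not address this at all: despite the theorem's wording, the argument given establishes only the bijection $K_n/G_n\cong\mathfrak{M_{0,\:n}}$, and the orbifold claim is handled exactly as you anticipate in your final paragraph---Corollary~\ref{34} is \emph{deduced from} this theorem, so the quotient $K_n/G_n$ is being taken as the definition of the orbifold structure rather than compared against an independent one. Your instinct that the chart-matching step risks circularity is therefore correct, and your proposed resolution (declare $K_n/G_n$ to be the orbifold structure and check compatibility with the moduli description) is precisely the paper's implicit stance. In short: your proof is at least as complete as the paper's, and more honest about what is actually being shown.
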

\begin{proof}
Given two elements
$$
\overline{\{0,\:1,\:\infty,\:\lambda^1,\:\lambda^2,\:\cdots,\:\lambda^{n-3}\}},\:\overline{\{0,\:1,\:\infty,\:\mu^1,\:\mu^2,\:\cdots,\:\mu^{n-3}\}}\in \mathfrak{M_{0,\:n}},
$$
set
$$
\boldsymbol{\lambda}=(\lambda^1,\:\lambda^2,\:\cdots,\:\lambda^{n-3}),\:\boldsymbol{\mu}=(\mu^1,\:\mu^2,\:\cdots,\:\mu^{n-3}).
$$

If $\boldsymbol{\mu}\in G_n(\boldsymbol{\lambda})$, then there exists some $\sigma\in S_n$ s.t.~$\boldsymbol{\mu}=g_\sigma(\boldsymbol{\lambda}).$ So we have
$$
f^{\boldsymbol {\lambda}}_\sigma(\{0,\:1,\:\infty,\:\lambda^1,\:\lambda^2,\:\cdots,\:\lambda^{n-3}\})=\{0,\:1,\:\infty,\:\mu^1,\:\mu^2,\:\cdots,\:\mu^{n-3}\}.
$$
Thus
$$
\overline{\{0,\:1,\:\infty,\:\lambda^1,\:\lambda^2,\:\cdots,\:\lambda^{n-3}\}}=\overline{\{0,\:1,\:\infty,\:\mu^1,\:\mu^2,\:\cdots,\:\mu^{n-3}\}}.
$$

On the other hand, if
$$
\overline{\{0,\:1,\:\infty,\:\lambda^1,\:\lambda^2,\:\cdots,\:\lambda^{n-3}\}}=\overline{\{0,\:1,\:\infty,\:\mu^1,\:\mu^2,\:\cdots,\:\mu^{n-3}\}},
$$
then there exists some linear fractional transformation $h$ s.t.
$$
h(\{0,\:1,\:\infty,\:\lambda^1,\:\lambda^2,\:\cdots,\:\lambda^{n-3}\})=\{0,\:1,\:\infty,\:\mu^1,\:\mu^2,\:\cdots,\:\mu^{n-3}\}
$$
and some $\sigma\in S_n$ s.t.~
$$
h(z^{\boldsymbol{\lambda}}_{\sigma^{-1}(k)})=z^{\boldsymbol{\mu}}_k,\:k=1,\:2,\:\cdots,\:n.
$$
So $h=f^{\boldsymbol {\lambda}}_\sigma$ and $\boldsymbol{\mu}=g_\sigma(\boldsymbol{\lambda})$.

\end{proof}

\begin{cor}
$\mathfrak{M_{0,\:n}}$ is a complex orbifold of dimension $n-3$ when $n\ge4$.
\end{cor}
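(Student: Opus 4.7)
The plan is to read off the corollary directly from the theorem immediately preceding it, which establishes $\mathfrak{M_{0,\:n}}\simeq K_n/G_n$ as orbifolds for $n\ge4$. Once that identification is in hand, it suffices to exhibit the right-hand side as a complex orbifold of dimension $n-3$, and this decomposes into three small checks.

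First, I would observe that $K_n$, defined as the complement in $\mathbb{C}^{n-3}$ of the finite union of hyperplanes $\{\lambda_i=0\}$, $\{\lambda_i=1\}$ and $\{\lambda_i=\lambda_j\}$ for $i\neq j$, is a Zariski open (hence analytic open and non-empty) subset of $\mathbb{C}^{n-3}$. It therefore inherits the structure of a connected complex manifold of dimension $n-3$. Second, by construction each $g_\sigma$ is a rational self-map of $\mathbb{C}^{n-3}$ whose restriction to $K_n$ is an isomorphism (this was already recorded when $G_n$ was defined), so the finite group $G_n$, of order at most $n!$, acts on $K_n$ by biholomorphisms.

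Third, I would invoke the standard fact that if a finite group $\Gamma$ acts holomorphically (and hence properly discontinuously) on a complex manifold $M$, then the quotient $M/\Gamma$ carries a canonical complex orbifold structure of the same dimension as $M$: around each point $[x]\in M/\Gamma$ one takes a $\Gamma_x$-invariant open neighbourhood $U\subseteq M$ of $x$ (obtained by shrinking and averaging), and the chart is $(U,\Gamma_x,U\to U/\Gamma_x)$. Applied to $M=K_n$ and $\Gamma=G_n$, this gives $K_n/G_n$ the structure of a complex orbifold of dimension $n-3$. Combining with the preceding theorem yields the corollary.

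There is essentially no obstacle here, since all the genuine content (the identification $\mathfrak{M_{0,\:n}}\simeq K_n/G_n$, the finiteness of $G_n$, and the biholomorphicity of its action) has already been established upstream; the only step that requires even naming is the general construction of the orbifold chart at a point with non-trivial stabilizer, and for that it is enough to remark that linearising the $G_n$-action near a fixed point (by the Bochner-type averaging of a local chart) produces a standard linear model $U/G_{n,\boldsymbol\lambda}$ of the required orbifold neighbourhood.
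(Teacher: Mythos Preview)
Your proposal is correct and matches the paper's approach: the paper states this corollary immediately after the theorem $\mathfrak{M_{0,\:n}}\simeq K_n/G_n$ without any further proof, treating it as an immediate consequence, and your argument is precisely the standard unpacking of that inference.
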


\subsection{The Explicit Structure of the Group $G_n$}
\label{grou}

Now let us explore $G_n$. Define subgroup $V_n$ of the symmetric group $S_n$
\begin{displaymath}
V_n=S_{\{1,\:2,\:3\}}\times S_{\{4,\:5,\:\cdots,\:n\}}
\end{displaymath}
and subset $T_n$ of $S_n$
\[
\begin{split}
T_n=\{&e,\:(p,\:1),\:(p,\:2),\:(p,\:3),\:(p,\:1)(q,\:2),\:(p,\:2)(q,\:3),\:(p,\:3)(q,\:1),\:(p,\:1)(q,\:2)(r,\:3)|\\
    &p,\:q,\:r=4,\:5,\:\cdots,\:n,\:p\ne q,\:q\ne r,\:r\ne p\}.
\end{split}
\]
Thus $T_n$ forms a complete list of left coset representatives of $V_n\subseteq S_n$.

Define group $H$ of linear fractional transformations
\begin{displaymath}
H=\{\mathrm{Id},\:\lambda\mapsto 1-\lambda,\: \lambda\mapsto \frac{1}{\lambda},\:\lambda\mapsto\frac{\lambda}{\lambda-1} ,\:\lambda\mapsto \frac{\lambda-1}{\lambda},\:\lambda\mapsto \frac{-1}{\lambda-1}\}.
\end{displaymath}
It is obvious that
\begin{displaymath}
\{g_\sigma|\sigma\in V_n\}=\{\boldsymbol{\lambda}\mapsto(h(\lambda^{\tau(1)}),\:h(\lambda^{\tau(2)}),\:\cdots,\:h(\lambda^{\tau(n-3)}))|h\in H,\:\tau\in S_{n-3}\}.
\end{displaymath}

For $p=4,\:5,\:\cdots,\:n$, set $\sigma=(1,\:p)$ and we have
\begin{displaymath}
f^{\boldsymbol {\lambda}}_{(1,\:p)}(\lambda^{p-3})=
f^{\boldsymbol {\lambda}}_{(1,\:p)} (z^{\boldsymbol{\lambda}}_{\sigma^{-1}(1)})=0,
\end{displaymath}
\begin{displaymath}
f^{\boldsymbol {\lambda}}_{(1,\:p)}(1)=
f^{\boldsymbol {\lambda}}_{(1,\:p)}(z^{\boldsymbol{\lambda}}_{\sigma^{-1}(2)})=1,
\end{displaymath}
\begin{displaymath}
f^{\boldsymbol {\lambda}}_{(1,\:p)}(\infty)=
f^{\boldsymbol {\lambda}}_{(1,\:p)}(z^{\boldsymbol{\lambda}}_{\sigma^{-1}(3)})=\infty.
\end{displaymath}
So we have
\begin{displaymath}
f^{\boldsymbol {\lambda}}_{(1,\:p)}(z)=\frac{z-\lambda^{p-3}}{1-\lambda^{p-3}},
\end{displaymath}
and
$$
g_{(1,\:p)}^{(k)}(\boldsymbol {\lambda})=
\begin{dcases}
\frac{\lambda^k-\lambda^{p-3}}{1-\lambda^{p-3}}, &k\ne p-3; \\
\frac{-\lambda^{p-3}}{1-\lambda^{p-3}},          &k=p-3.
\end{dcases}
$$

By the same method I calculated $g_\sigma$ for each $\sigma\in S_n$. The next theorem is the result.
\begin{thm}
$G_n=\tilde T_n\tilde V_n$, where
$$
\tilde V_n=\{\boldsymbol{\lambda}\mapsto(h(\lambda^{\tau(1)}),\:h(\lambda^{\tau(2)}),\:\cdots,\:h(\lambda^{\tau(n-3)}))|h\in H,\:\tau\in S_{n-3}\};
$$
$$
H=\{\mathrm{Id},\:\lambda\mapsto 1-\lambda,\: \lambda\mapsto \frac{1}{\lambda},\:\lambda\mapsto\frac{\lambda}{\lambda-1} ,\:\lambda\mapsto \frac{\lambda-1}{\lambda},\:\lambda\mapsto \frac{-1}{\lambda-1}\};
$$
and
\[
\begin{split}
\tilde T_n=\{g_\sigma|&\sigma=e,\:(p,\:1),\:(p,\:2),\:(p,\:3),\:(p,\:1)(q,\:2),\:(p,\:2)(q,\:3),\:(p,\:3)(q,\:1),\:(p,\:1)(q,\:2)(r,\:3)\\
&p,\:q,\:r=4,\:5,\:\cdots,\:n,\:p\ne q,\:q\ne r,\:r\ne p\};
\end{split}
\]
$$
g_{(1,\:p)}^{(k)}(\boldsymbol {\lambda})=
\begin{dcases}
\frac{\lambda^k-\lambda^{p-3}}{1-\lambda^{p-3}}, &k\ne p-3; \\
\frac{-\lambda^{p-3}}{1-\lambda^{p-3}},          &k=p-3;
\end{dcases}
$$
$$
g_{(2,\:p)}^{(k)}(\boldsymbol {\lambda})=
\begin{dcases}
\frac{\lambda^k}{\lambda^{p-3}}, &k\ne p-3; \\
\frac{1}{\lambda^{p-3}},         &k=p-3;
\end{dcases}
$$
$$
g_{(3,\:p)}^{(k)}(\boldsymbol {\lambda})=
\begin{dcases}
\frac{\lambda^k(1-\lambda^{p-3})}{\lambda^k-\lambda^{p-3}}, &k\ne p-3; \\
\frac{\lambda^{p-3}-1}{\lambda^{p-3}},                      &k=p-3;
\end{dcases}
$$
$$
g_{(1,\:p)(2\:,q)}^{(k)}(\boldsymbol {\lambda})=
\begin{dcases}
\frac{\lambda^k-\lambda^{p-3}}{\lambda^{q-3}-\lambda^{p-3}}, &k\ne p-3,\:q-3;\\
\frac{-\lambda^{p-3}}{\lambda^{q-3}-\lambda^{p-3}},          &k=p-3;\\
\frac{1-\lambda^{p-3}}{\lambda^{q-3}-\lambda^{p-3}},         &k=q-3;
\end{dcases}
$$
$$
g_{(2,\:p)(3\:,q)}^{(k)}(\boldsymbol {\lambda})=
\begin{dcases}
\frac{(\lambda^{p-3}-\lambda^{q-3})\lambda^k}{\lambda^{p-3}(\lambda^k-\lambda^{q-3})},&k\ne p-3,\:q-3;\\
\frac{\lambda^{p-3}-\lambda^{q-3}}{\lambda^{p-3}(1-\lambda^{q-3})},&k=p-3;\\
\frac{\lambda^{p-3}-\lambda^{q-3}}{\lambda^{p-3}},&k=q-3;
\end{dcases}
$$
$$
g_{(3,\:p)(1\:,q)}^{(k)}(\boldsymbol {\lambda})=
\begin{dcases}
\frac{(\lambda^{p-3}-1)(\lambda^k-\lambda^{q-3})}{(\lambda^{q-3}-1)(\lambda^k-\lambda^{p-3})},&k\ne p-3,\:q-3;\\
\frac{\lambda^{p-3}-1}{\lambda^{q-3}-1},&k=p-3;\\
\frac{(\lambda^{p-3}-1)\lambda^{q-3}}{(\lambda^{q-3}-1)\lambda^{p-3}},&k=q-3;
\end{dcases}
$$
$$
g_{(1,\:p)(2\:,q)(3,\:r)}^{(k)}(\boldsymbol {\lambda})=
\begin{dcases}
\frac{(\lambda^{q-3}-\lambda^{r-3})(\lambda^k-\lambda^{p-3})}{(\lambda^{q-3}-\lambda^{p-3})(\lambda^k-\lambda^{r-3})}, &k\ne p-3,\:q-3,\:r-3;\\
\frac{(\lambda^{q-3}-\lambda^{r-3})\lambda^{p-3}}{(\lambda^{q-3}-\lambda^{p-3})\lambda^{r-3}},&k=p-3;\\
\frac{(\lambda^{q-3}-\lambda^{r-3})(1-\lambda^{p-3})}{(\lambda^{q-3}-\lambda^{p-3})(1-\lambda^{r-3})},&k=q-3;\\
\frac{\lambda^{q-3}-\lambda^{r-3}}{\lambda^{q-3}-\lambda^{p-3}},&k=r-3.
\end{dcases}
$$
\end{thm}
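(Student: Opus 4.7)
My plan is to combine the group-theoretic observation (left coset decomposition) with the homomorphism $\sigma\mapsto g_\sigma$ (already established in the preceding theorem $g_\pi\circ g_\sigma=g_{\pi\cdot\sigma}$) to reduce the proof to a finite list of direct calculations. Since $T_n$ is stated to be a complete set of left coset representatives for $V_n\subseteq S_n$, every $\sigma\in S_n$ factors as $\sigma=t\cdot v$ with $t\in T_n$ and $v\in V_n$, whence $g_\sigma=g_t\circ g_v$. Thus once we identify $\{g_v\mid v\in V_n\}=\tilde V_n$ and compute $g_t$ explicitly for each of the eight forms of $t\in T_n$, the decomposition $G_n=\tilde T_n\tilde V_n$ and the formulas follow.

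First I would handle the subgroup $V_n=S_{\{1,2,3\}}\times S_{\{4,\ldots,n\}}$. For $v=v_1v_2$ with $v_1\in S_{\{1,2,3\}}$ and $v_2\in S_{\{4,\ldots,n\}}$, the indices $v^{-1}(1),v^{-1}(2),v^{-1}(3)$ lie in $\{1,2,3\}$, so $f^{\boldsymbol\lambda}_v$ permutes the triple $(0,1,\infty)$ according to $v_1$. The six possible Möbius transformations that permute $\{0,1,\infty\}$ are exactly the elements of $H$, and evaluating $f^{\boldsymbol\lambda}_v$ at $z^{\boldsymbol\lambda}_{v^{-1}(k)}=\lambda^{v^{-1}(k)-3}$ for $k\ge 4$ produces an element of $H$ applied to a $v_2$-permuted entry of $\boldsymbol\lambda$. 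This yields $\{g_v\mid v\in V_n\}=\tilde V_n$.

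Next I would compute $g_t$ for each coset representative $t\in T_n$ directly from the definition. For each shape, $\sigma^{-1}(1),\sigma^{-1}(2),\sigma^{-1}(3)$ is read off from the cycle decomposition, which pins down three values of $f^{\boldsymbol\lambda}_\sigma$; then the unique Möbius transformation realizing these is written by the standard cross-ratio formula. Plugging $z^{\boldsymbol\lambda}_{\sigma^{-1}(k+3)}$ (which equals $\lambda^{k}$, $0$, $1$, or $\infty$ depending on whether $k+3$ is moved by $\sigma$ and where it is sent) into this formula gives each coordinate $g_\sigma^{(k)}(\boldsymbol\lambda)$. As a template, for $\sigma=(1,p)$ one has $\sigma^{-1}(1)=p$, $\sigma^{-1}(2)=2$, $\sigma^{-1}(3)=3$, so $f^{\boldsymbol\lambda}_{(1,p)}$ is the unique Möbius map sending $(\lambda^{p-3},1,\infty)\mapsto(0,1,\infty)$, namely $z\mapsto(z-\lambda^{p-3})/(1-\lambda^{p-3})$, and evaluating at $\lambda^k$ (for $k\ne p-3$) or at $0$ (for $k=p-3$, since $\sigma^{-1}(p)=1$) reproduces the first formula. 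The remaining seven families $(2,p)$, $(3,p)$, $(1,p)(2,q)$, $(2,p)(3,q)$, $(3,p)(1,q)$, $(1,p)(2,q)(3,r)$ are handled by the same recipe: determine the three points to be sent to $0,1,\infty$, write $f^{\boldsymbol\lambda}_\sigma$ as the corresponding cross ratio, and evaluate at the $z^{\boldsymbol\lambda}_{\sigma^{-1}(k+3)}$.

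The main obstacle is not conceptual but organizational: keeping the eight case calculations consistent, correctly interpreting $\sigma^{-1}$ on each of the moved indices $\{p,q,r\}$, and recognizing the special sub-cases $k=p-3$, $k=q-3$, $k=r-3$ which lead to indeterminate-looking substitutions like $\lambda^k=0,1,\infty$ that must be resolved by the limit interpretation of the Möbius map. Once the bookkeeping is set up with a uniform notation (triple-of-targets and cross-ratio formula), each case is a short, routine algebraic simplification, and the theorem follows by assembling the pieces via $g_{tv}=g_t\circ g_v$ for $t\in T_n$ and $v\in V_n$.
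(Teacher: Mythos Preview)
Your proposal is correct and follows essentially the same approach as the paper. The paper sets up $V_n$ and $T_n$ as a subgroup and a transversal, observes that $\{g_\sigma\mid\sigma\in V_n\}=\tilde V_n$, works out $g_{(1,p)}$ explicitly by the same three-point normalization you describe, and then simply declares ``By the same method I calculated $g_\sigma$ for each $\sigma\in S_n$''; your plan is a more carefully articulated version of exactly this computation, invoking the already-proved relation $g_\pi\circ g_\sigma=g_{\pi\cdot\sigma}$ to justify the factorization $G_n=\tilde T_n\tilde V_n$.
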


\begin{cor}
$G_n$ is isomorphic to $S_n$ when $
{n\ge5}$.
\end{cor}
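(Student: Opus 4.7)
The strategy is to exhibit $\sigma\mapsto g_\sigma$ as a surjective homomorphism from $S_n$ onto $G_n$ and then pin down its kernel.

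First, the multiplication rule $g_\pi\circ g_\sigma = g_{\pi\cdot\sigma}$, established in the proof of the previous theorem, together with the definition $G_n=\{g_\sigma\mid \sigma\in S_n\}$, shows that $\sigma\mapsto g_\sigma$ is a surjective group homomorphism $S_n\twoheadrightarrow G_n$. Consequently $G_n\simeq S_n/N$, where $N$ denotes the kernel, and it suffices to prove $N=\{e\}$.

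Second, I would invoke the classical structure theorem: for $n\geq 5$ the alternating group $A_n$ is simple, so the only normal subgroups of $S_n$ are $\{e\}$, $A_n$ and $S_n$. Thus $N\in\{\{e\},A_n,S_n\}$, and it is enough to exhibit a single even non-identity permutation whose associated map $g_\sigma$ does not act as the identity on $K_n$; such an example rules out $N=A_n$ and a fortiori $N=S_n$.

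Third, I would take $\sigma=(1,4)(2,5)\in A_n$, which is available precisely because $n\geq 5$. Specializing the explicit formula for $g_{(1,p)(2,q)}$ from the preceding theorem to $p=4$, $q=5$ gives, in the first coordinate,
$$g_\sigma^{(1)}(\boldsymbol{\lambda})=\frac{-\lambda^{1}}{\lambda^{2}-\lambda^{1}},$$
which is not the rational function $\lambda^1$; numerically, at $(\lambda^1,\lambda^2)=(2,3)\in K_n$ it equals $-2\neq 2$. Hence $g_\sigma\neq\mathrm{id}_{K_n}$, forcing $N=\{e\}$ and concluding $G_n\simeq S_n$.

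There is no real obstacle beyond ensuring one has room to build a nontrivial even permutation in $S_{\{4,5,\ldots,n\}}\rtimes\cdots$; the hypothesis $n\geq 5$ is used exactly here, consistent with the fact that for $n=4$ the Klein four subgroup of $S_4$ is an additional normal subgroup and $G_4$ is a proper quotient of $S_4$.
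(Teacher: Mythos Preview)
Your argument is correct. The homomorphism property $g_\pi\circ g_\sigma=g_{\pi\sigma}$ is already proved in the paper, the normal-subgroup classification of $S_n$ for $n\ge 5$ is standard, and your check that $g_{(1,4)(2,5)}\ne\mathrm{id}$ using the explicit formula for $g_{(1,p)(2,q)}^{(p-3)}$ is valid (one only needs to note that a point such as $(\lambda^1,\lambda^2,\ldots)=(2,3,4,\ldots)$ lies in $K_n$).

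Your route differs from the paper's. The paper states the isomorphism as an immediate corollary of the preceding structure theorem $G_n=\tilde T_n\tilde V_n$: once the explicit rational formulas for the $g_\sigma$ on the coset representatives $T_n$ and on $V_n$ are written down, one can read off that distinct $\sigma$ produce distinct birational maps, so $|G_n|=|T_n|\cdot|V_n|=n!$. You bypass this inspection entirely by using the simplicity of $A_n$ to reduce injectivity to a single evaluation. Your approach is more economical and makes transparent exactly where $n\ge 5$ enters; the paper's approach has the compensating advantage that the explicit formulas are available anyway for later use, so no extra group theory is invoked.
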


\subsection{The Singularities of Moduli Space $\mathfrak{M_{0,\:n}}$ }
\label{sing}

For $\boldsymbol {\lambda}\in K_n$, $n\ge5$, set
$$
[\boldsymbol {\lambda}]=\{0,\:1,\:\infty,\:\lambda^1,\:\lambda^2,\:\cdots,\:\lambda^{n-3}\}=\{z^{\boldsymbol {\lambda}}_k|k=1,\:2,\:\cdots,\:n\}
$$
and $G_{\boldsymbol \lambda}=\{g_{\sigma}\in G_n\:|\:g_\sigma(\boldsymbol \lambda)=\boldsymbol \lambda\}$ the stabilizer of $\boldsymbol {\lambda}$
\begin{defn}
\label{def}
When $n\ge5$, call $\overline{[\boldsymbol {\lambda}]}\in \mathfrak{M_{0,\:n}}$ an oribfold singularity of $\mathfrak{M_{0,\:n}}$ if $G_{\boldsymbol \lambda}$ is non-trivial.
\end{defn}

Given a finite subset $\alpha=\{z_1, z_2, \cdots, z_n\}\subseteq \widehat{\mathbb{C}}$, $n\ge4$, let $\mathcal{A}_{\alpha}$ denote the stabilizing group, i.e., the group of linear fractional transformations that fix $\alpha$. It is easy to check that


\begin{thm}
The mapping
$$
\Phi_{\boldsymbol \lambda}:\:G_{\boldsymbol \lambda}\to\mathcal{A}_{[\boldsymbol {\lambda}]},\: g_\sigma\mapsto f^{\boldsymbol {\lambda}}_\sigma
$$
is a group isomorphism 
for any $\boldsymbol {\lambda}\in K_{n}$, $n\ge5$.
\end{thm}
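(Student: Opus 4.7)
The plan is to verify the four things needed to conclude $\Phi_{\boldsymbol\lambda}$ is a group isomorphism: well-definedness, the homomorphism property, injectivity, and surjectivity. The engine is the Remark $f^{\boldsymbol\lambda}_\sigma(z^{\boldsymbol\lambda}_k)=z^{g_\sigma(\boldsymbol\lambda)}_{\sigma(k)}$ together with the cocycle-type identity $f^{g_\sigma(\boldsymbol\lambda)}_\pi\circ f^{\boldsymbol\lambda}_\sigma=f^{\boldsymbol\lambda}_{\pi\cdot\sigma}$ established in Section \ref{grou}. The assumption $n\ge 5$ enters because $\boldsymbol\lambda\in K_n$ guarantees the $n$ values $z^{\boldsymbol\lambda}_1,\ldots,z^{\boldsymbol\lambda}_n$ are pairwise distinct points of $\widehat{\mathbb C}$.

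For well-definedness, I take any $g_\sigma\in G_{\boldsymbol\lambda}$, so $g_\sigma(\boldsymbol\lambda)=\boldsymbol\lambda$. Applying the Remark gives $f^{\boldsymbol\lambda}_\sigma(z^{\boldsymbol\lambda}_k)=z^{\boldsymbol\lambda}_{\sigma(k)}$ for every $k$, so $f^{\boldsymbol\lambda}_\sigma$ permutes $[\boldsymbol\lambda]$ and hence lies in $\mathcal A_{[\boldsymbol\lambda]}$. For the homomorphism property, given $g_\sigma,g_\pi\in G_{\boldsymbol\lambda}$, the identity $f^{g_\sigma(\boldsymbol\lambda)}_\pi\circ f^{\boldsymbol\lambda}_\sigma=f^{\boldsymbol\lambda}_{\pi\cdot\sigma}$ collapses, using $g_\sigma(\boldsymbol\lambda)=\boldsymbol\lambda$, to $f^{\boldsymbol\lambda}_\pi\circ f^{\boldsymbol\lambda}_\sigma=f^{\boldsymbol\lambda}_{\pi\cdot\sigma}$, which is exactly $\Phi_{\boldsymbol\lambda}(g_\pi)\circ\Phi_{\boldsymbol\lambda}(g_\sigma)=\Phi_{\boldsymbol\lambda}(g_{\pi\cdot\sigma})=\Phi_{\boldsymbol\lambda}(g_\pi\circ g_\sigma)$.

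For injectivity, suppose $\Phi_{\boldsymbol\lambda}(g_\sigma)=\Phi_{\boldsymbol\lambda}(g_\tau)$, i.e., $f^{\boldsymbol\lambda}_\sigma=f^{\boldsymbol\lambda}_\tau$ as maps of $\widehat{\mathbb C}$. Evaluating both sides at $z^{\boldsymbol\lambda}_k$ and using the Remark (together with $g_\sigma(\boldsymbol\lambda)=g_\tau(\boldsymbol\lambda)=\boldsymbol\lambda$) gives $z^{\boldsymbol\lambda}_{\sigma(k)}=z^{\boldsymbol\lambda}_{\tau(k)}$ for every $k$; since the $n$ points of $[\boldsymbol\lambda]$ are distinct, $\sigma=\tau$, hence $g_\sigma=g_\tau$. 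For surjectivity, take $h\in\mathcal A_{[\boldsymbol\lambda]}$. Then $h$ induces a permutation $\sigma\in S_n$ defined by $h(z^{\boldsymbol\lambda}_k)=z^{\boldsymbol\lambda}_{\sigma(k)}$. In particular $h$ sends $z^{\boldsymbol\lambda}_{\sigma^{-1}(1)},z^{\boldsymbol\lambda}_{\sigma^{-1}(2)},z^{\boldsymbol\lambda}_{\sigma^{-1}(3)}$ to $0,1,\infty$, so by the defining property and the uniqueness of a Möbius transformation determined by three point values, $h=f^{\boldsymbol\lambda}_\sigma$. It remains to check $g_\sigma\in G_{\boldsymbol\lambda}$: comparing the two expressions $f^{\boldsymbol\lambda}_\sigma(z^{\boldsymbol\lambda}_k)=z^{g_\sigma(\boldsymbol\lambda)}_{\sigma(k)}$ (from the Remark) and $h(z^{\boldsymbol\lambda}_k)=z^{\boldsymbol\lambda}_{\sigma(k)}$ yields $z^{g_\sigma(\boldsymbol\lambda)}_j=z^{\boldsymbol\lambda}_j$ for all $j$, i.e., $g_\sigma(\boldsymbol\lambda)=\boldsymbol\lambda$, so $\Phi_{\boldsymbol\lambda}(g_\sigma)=h$.

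I do not anticipate a real obstacle: every step is a direct consequence of the Remark, the associativity identity already proved for $G_n$, and the distinctness of the $n$ coordinates of $[\boldsymbol\lambda]$. The only mild subtlety is the surjectivity step, where one must remember to separately verify that the $\sigma$ produced from $h$ actually stabilizes $\boldsymbol\lambda$ rather than merely permuting $[\boldsymbol\lambda]$; this is handled by the Remark, as indicated above.
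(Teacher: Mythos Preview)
Your argument is correct and is exactly the routine verification the paper itself omits (the paper only writes ``It is easy to check that'' before stating the theorem), using the Remark $f^{\boldsymbol\lambda}_\sigma(z^{\boldsymbol\lambda}_k)=z^{g_\sigma(\boldsymbol\lambda)}_{\sigma(k)}$ and the identity $f^{g_\sigma(\boldsymbol\lambda)}_\pi\circ f^{\boldsymbol\lambda}_\sigma=f^{\boldsymbol\lambda}_{\pi\cdot\sigma}$ in the intended way. One small clarification: the distinctness of the points $z^{\boldsymbol\lambda}_1,\ldots,z^{\boldsymbol\lambda}_n$ already holds for every $\boldsymbol\lambda\in K_n$ with $n\ge 4$, so that is not where the hypothesis $n\ge 5$ enters; rather, $n\ge 5$ is needed (via the Corollary $G_n\cong S_n$) to know that $\sigma\mapsto g_\sigma$ is injective, so that the rule $g_\sigma\mapsto f^{\boldsymbol\lambda}_\sigma$ is a well-defined function of the element $g_\sigma\in G_{\boldsymbol\lambda}$ and not merely of the label $\sigma$.
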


\section{Possible Values for the Cardinality of $\alpha$}
\label{cardinality}

\subsection{Dihedral Group}
\label{D_n}

First suppose that $n\ge3$. From Theorem \ref{tD_n} and observation we know that there exist some finite subsets $\alpha,\:\beta\subseteq S^2$ such that $\mathcal{A}_{\alpha}\simeq D_n$ with dihedral-$n$ component index $(0, 1, 0)$, and $\mathcal{A}_{\beta}\simeq D_n$ (if $n\ne4$) with dihedral-$n$ component index $(1, 1, 0)$.
When the dihedral-$n$ component index of $\alpha$ is $(0, 2, 0)$ or $(1, 2, 0)$, $\mathcal{A}_{\alpha}$ is isomorphic to $D_{2n}$.
When $n=4$ and the dihedral-$4$ component index of $\beta$ is $(1, 1, 0)$,  $\mathcal{A}_{\beta}$ is isomorphic to $S_4$.

Then suppose that $n=2$. It is obvious that when the dihedral-$2$  component index of $\alpha$ is $(1, 0)$, $(2, 0)$ and $(3, 0)$, $\mathcal{A}_{\alpha}$ is infinite, isomorphic to $D_4$ and $S_4$, respectively.

Now suppose that $k\ge1$. In the proposition below, I shall give examples of $\alpha$ such that $\mathcal{A}_{\alpha}\simeq D_n$ with dihedral-$n$  component index $(0, 0, k)$ when $n\ge3$, and $\alpha$ such that $\mathcal{A}_{\alpha}\simeq K_4$ with dihedral-$2$ component index $(0, k)$. Then it will be easy to find examples of $\alpha$ with dihedral-$n$ component index $(\nu, \epsilon, k)$ or $(\nu, k)$ for each element $(\nu, \epsilon, k)\in\{0, 1\}\times\{0, 1, 2\}\times\mathbb N^+$ and $(\nu, k)\in\{0, 1, 2, 3\}\times\mathbb N^+$ by the same means. Thus the proof of Theorem \ref{cD_n} is completed.

\begin{prop}
The finite subset
$$
\alpha=\bigcup_{l=1}^k C_n(e^{\frac{l}{8k^2n}2\pi i}),
$$
where $C_n(z)=\{ze^{\frac{k}{n}2\pi i}|k\in\mathbb{Z}\}\cup\{z^{-1}e^{\frac{k}{n}2\pi i}|k\in\mathbb{Z}\}$ and $k\ge1$,
has stabilizer $\mathcal{A}_{\alpha}\simeq D_n$ with dihedral-$n$ component index $(0,\:0,\:k)$ when $n\ge3$ and stabilizer $\mathcal{A}_{\alpha}\simeq K_4$ with dihedral-$2$ component index $(0,\:k)$ when $n=2$.
\end{prop}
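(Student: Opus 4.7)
The plan is to establish the proposition in two stages: first, verify that $\alpha$ is a disjoint union of $k$ generic $G$-orbits of type $C_n$, yielding the stated component index; second, invoke Theorem~\ref{tD_n} by checking its exclusion conditions (2)--(5), yielding $\mathcal{A}_\alpha = G$ (since $\mathcal{A}_\alpha \supseteq G$ is automatic from the construction).

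For the first stage, I would check that each $z_l = e^{2\pi i l/(8k^2 n)}$ avoids the special set $\{e^{2\pi i l'/(2n)} : l' \in \mathbb{Z}\}$: the equation $l \equiv 4k^2 l' \pmod{8k^2 n}$ has no solution for $1 \le l \le k < 4k^2$ and every $k \ge 1$. Hence each $C_n(z_l)$ is a generic orbit of size $2n$. For pairwise distinctness, $C_n(z_{l_1}) = C_n(z_{l_2})$ would force $l_1 \mp l_2 \equiv 8k^2 j \pmod{8k^2 n}$ for some $j \in \mathbb{Z}$, and $|l_1 \pm l_2| \le 2k < 8k^2$ forces $j = 0$ and hence $l_1 = l_2$.

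For the second stage, condition (1) is automatic. For condition (2), if $\alpha$ were a union of $D_{pn}$-orbits for some $p \ge 2$, then $\mathcal{A}_\alpha$ would contain the rotation $z \mapsto e^{2\pi i/(pn)} z$ (sharing the axis $\{0,\infty\}$ with $G$'s order-$n$ rotation), and the invariance $e^{2\pi i/(pn)} z_l \in \alpha$ reduces via arithmetic mod $8k^2$ to $l + q \equiv \pm m \pmod{8k^2}$ for some $m \in \{1,\ldots,k\}$, where $q = 8k^2/p \in \{1,\ldots,4k^2\}$ must be an integer. Taking $l = k$: the $+$-branch gives $m = k + q > k$, out of range, and the $-$-branch requires $q \ge 8k^2 - 2k$, incompatible with $q \le 4k^2$. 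Hence $e^{2\pi i/(pn)} z_k \notin \alpha$. For conditions (3) and (4), relevant only when $n = 2$, I would note $\alpha \subset S^1$ whereas $\phi$ and $\psi$ map $S^1$ to the imaginary and real axes of $\widehat{\mathbb{C}}$ respectively; a rotation $w \mapsto e^{\pi i/p}w$ with $p \ge 2$ would confine $\phi^{-1}(\alpha)$ or $\psi^{-1}(\alpha)$ to the two-point set $\{0,\infty\}$, contradicting $|\alpha| = 4k \ge 4$.

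For condition (5), observe $\alpha \subset S^1 =$ equator of the Riemann sphere. For $n \ge 6$ no polyhedral group has a rotation of order $\ge 6$, so this case is vacuous. Otherwise, if some nonempty polyhedral orbit $O$ were contained in $S^1$, the induced $\Gamma$-action on $O$ for $\Gamma \in \{A_4, S_4, A_5\}$ would factor through the dihedral symmetry group $D_{|O|}$ of $|O|$ points on $S^1$; since none of $A_4, S_4, A_5$ embeds in any dihedral group, the kernel would be a nontrivial normal subgroup of $\Gamma$, but each such subgroup ($V_4\lhd A_4$; $V_4,A_4\lhd S_4$; and $A_5$ itself) lacks any common fixed point on $S^2$, forcing $O = \emptyset$. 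The main obstacle is the delicate modular arithmetic in condition (2); the exponent denominator $8k^2$ was crafted precisely so that the $z_l$'s cluster within an arc too small to admit any finer rotational symmetry.
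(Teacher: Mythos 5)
Your proposal is correct and takes essentially the same route as the paper: $D_n\subseteq\mathcal{A}_\alpha$ is automatic, polyhedral overgroups are excluded by concyclicity, larger dihedral overgroups about the axis $\{0,\infty\}$ are excluded by the angular-gap/modular arithmetic built into the denominator $8k^2$, and the extra axes arising only for $n=2$ are handled separately (the paper does this via the fixed points of $\rho$ rather than via $\phi,\psi$, but the content is the same). The one nit is your claim that $q=8k^2/p$ ``must be an integer'' --- it need not be, but this is harmless: if $q\notin\mathbb{Z}$ the congruence $l+q\equiv\pm m\pmod{8k^2}$ has no solution at all, and your two-branch inequalities remain valid for any real $q\in(0,4k^2]$.
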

\begin{proof}
It is obvious that
$$
D_n\simeq\langle z\mapsto e^{\frac{1}{n}2\pi i}z,\:
          z\mapsto \frac{1}{z}              \rangle
\subseteq\mathcal{A}_{\alpha}.
$$
$\mathcal{A}_{\alpha}$ is not isomorphic to $A_5$, $S_4$ or $A_4$ as all points of $\alpha$ are concyclic. Assume that
$$
\mathcal{A}_{\alpha}
=\langle \rho,\pi|(\rho)^{pn}=(\pi)^2=(\rho\pi)^2=e\rangle
\simeq D_{pn}
$$
for some $2\le p\le 2k$.

\emph{Claim:} There exists some $m\in\mathbb Z$ such that $\rho^m(z)=ze^{\frac{1}{n}2\pi i}$.

\emph{Proof of Claim:} The claim is obvious when $n\ge3$. When $n=2$, it is clear that
$$
\rho^p\in\langle z\mapsto-z,\:z\mapsto\frac{1}{z}\rangle\simeq K_4.
$$
Suppose that $\rho^p(z)=\frac{1}{z}$. Because $\rho$ is a M\"obius transformation of order $2p\ge4$, it is elliptic with fixed points $1$ and $-1$, and thus never fixes $\alpha$. Thus $\rho^p(z)\ne\frac{1}{z}$. For the same reason $\rho^p(z)\ne\frac{-1}{z}$. Thus $\rho^p(z)=-z$, and the claim is proved.

Because $\rho$ is a M\"obius transformation of order $pn$, it is elliptic with fixed points $0$ and $\infty$. Thus
$$
\rho(z)=ze^{\frac{n'}{pn}2\pi i},\:(pn, n')=1.
$$
Without lose of generality assume that $n'=1$. Then
$$
\rho(e^{\frac{1}{8k^2n}2\pi i})
=e^{(\frac{1}{8k^2n}+\frac{1}{pn})2\pi i}\in\alpha
$$
with
$$
\frac{1}{8k^2n}+\frac{1}{pn}\in
[\frac{1}{8k^2n}+\frac{1}{2kn},\:\frac{1}{8k^2}+\frac{1}{2n}]
$$
since $2\le p\le 2k$. However, by the construction of $\alpha$ we know that
$$
\alpha\cap\{e^{t2\pi i}\:|\:t\in
(\frac{1}{8kn},\:\frac{1}{n}-\frac{1}{8kn})\}=\emptyset.
$$
Thus
$$
\frac{1}{8k^2n}+\frac{1}{pn}\in
[\frac{1}{n}-\frac{1}{8kn},\:\frac{1}{8k^2}+\frac{1}{2n}],
$$
which contradicts the assumption that $p\ge2$.
\end{proof}

\subsection{Cyclic Group}
\label{Z_n}

Suppose that $n\ge3$. Set
$$
\alpha_{n, k}=\bigcup^k_{l=1}C_n(l),\:
\beta_{n, k}=\alpha\cup\{0\},\:
\gamma_{n, k}=\beta\cup\{\infty\}
$$
where $C_n(z)=\{ze^{\frac{k}{n}2\pi i}|k\in\mathbb{Z}\}$. From Theorem \ref{tZ_n} and observation we know that when $k\ge3$
$$
\mathcal{A}_{\alpha_{n, k}}=\mathcal{A}_{\gamma_{n, k}}=
\langle z\mapsto e^{\frac{1}{n}2\pi i}z\rangle\simeq\mathbb Z_n
$$
and when $k\ge1$ (except for $\beta_{3, 1}$),
$$
\mathcal{A}_{\beta_{n, k}}=
\langle z\mapsto e^{\frac{1}{n}2\pi i}z\rangle\simeq\mathbb Z_n.
$$
Thus we have found subsets whose stabilizers isomorphic to $\mathbb Z_n$ with cyclic-$n$ component index $(0, k)$ and $(2, k)$ for $k\ge3$ and cyclic-$n$ component index $(1, k)$ for $k\ge1$ (except for $(1, 1)$ when $n=3$.) When the cyclic-$n$ component index of $\alpha$ is $(0, 1)$, $(0, 2)$, $(2, 1)$ or $(2, 2)$, $\mathcal{A}_{\alpha}$ is isomorphic to $D_{n}$. When $n=3$ and the cyclic-$3$ component index of $\alpha$ is $(1, 1)$, $\mathcal{A}_{\alpha}$ is isomorphic to $A_4$.

Now suppose that $n=2$. In the following proposition we will find finite subsets whose stabilizers are isomorphic to $\mathbb Z_2$ with cyclic-$2$ component index $(1, k)$ for each $k\ge2$.
\begin{prop}
The finite subset
$$
\alpha=\{0, \pm1, \pm2, \cdots \pm k\},\:k\ge2
$$
has stabilizer
$\mathcal{A}_{\alpha}=\langle z\mapsto-z\rangle\simeq\mathbb Z_2$ with cyclic-$2$ component index $(1, k)$.
\end{prop}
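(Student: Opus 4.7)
The inclusion $\langle z\mapsto -z\rangle\subseteq\mathcal{A}_\alpha$ is immediate from $-\alpha=\alpha$, and the $\mathbb{Z}_2$-orbit decomposition $\alpha=\{0\}\cup\bigcup_{l=1}^{k}\{l,-l\}$ exhibits the cyclic-$2$ component index $(1,k)$: one orbit of type $N(S)$ (namely $\{0\}$) and $k$ orbits of type $C_2$. Everything therefore reduces to showing that $\mathcal{A}_\alpha$ contains no M\"obius transformation other than the identity and $z\mapsto -z$.

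The first step of my plan is to observe that $\alpha\subseteq\mathbb{R}$ contains $2k+1\geq 5$ real points, so any $f\in\mathcal{A}_\alpha$ sends at least three points of $\widehat{\mathbb{R}}$ into $\widehat{\mathbb{R}}$; since a M\"obius transformation is determined by its values on three points, this forces $f(\widehat{\mathbb{R}})=\widehat{\mathbb{R}}$, and hence $\mathcal{A}_\alpha\subseteq\mathrm{PGL}(2,\mathbb{R})$. A standard classification shows that every finite subgroup of $\mathrm{PGL}(2,\mathbb{R})$ is cyclic or dihedral, and that each orientation-reversing finite-order element is an involution (such an element must have real eigenvalues $\pm\mu$ since complex-conjugate eigenvalues would force positive determinant, so its square is scalar). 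As $z\mapsto -z$ has determinant $-1$, it is one such involution, so either $\mathcal{A}_\alpha=\langle -z\rangle\simeq\mathbb{Z}_2$ and we are finished, or $\mathcal{A}_\alpha\simeq D_q$ with $q\geq 2$ and $z\mapsto -z$ appearing as one of the reflections.

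I would then rule out the dihedral case. The orientation-preserving subgroup $\mathbb{Z}_q\subseteq D_q$ is generated by an elliptic $g\in\mathrm{PSL}(2,\mathbb{R})$ of order $q$ fixing some $p\in\mathbb{H}^2$; the axis of the reflection $z\mapsto -z$ in $\mathbb{H}^2$ is the imaginary half-line, and it must contain $p$, so $p=it$ for some $t>0$. Because $g$'s only fixed points are $\pm it\notin\widehat{\mathbb{R}}$, it acts freely on $\alpha$, forcing $q\mid |\alpha|=2k+1$ and hence $q$ odd. The constraint $g(0),g(1),g(-1)\in\mathbb{Z}$ determines $g$ as a M\"obius transformation and, via the resulting linear system, yields a matrix representative in $\mathrm{GL}(2,\mathbb{Q})$; the scalar-invariant $\mathrm{tr}^2/\!\det$ of this representative is therefore rational. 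On the other hand this invariant equals $4\cos^2(j\pi/q)=2+2\cos(2j\pi/q)$ for some $j$ with $\gcd(j,q)=1$, so $\cos(2j\pi/q)\in\mathbb{Q}$; Niven's theorem then forces $q\in\{1,2,3,4,6\}$, and with $q$ odd $\geq 2$ only $q=3$ remains.

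The main obstacle is the final exclusion of $q=3$, where I would compute $g$ explicitly. Using $p=it$, $\phi=\pi/3$, and $t=m/\sqrt{3}$ with $m:=g(0)\in\alpha\setminus\{0\}$, the formula becomes
\[
g(z)=\frac{m(z+m)}{m-3z}.
\]
The integrality condition $g(1)\in\mathbb{Z}$ reduces to $(m-3)\mid 12$, giving (after replacing $g$ by $g^{-1}$ if $m<0$) the finite list $m\in\{1,2,4,5,6,7,9,15\}$. A direct case check shows each possibility fails: e.g.\ $m=1$ yields $g(2)=-3/5$, $m=2$ yields $g^2(1)=-2/5$, and the only value generating an entirely integer $1$-orbit is $m=9$ with orbit $\{1,15,-6\}$, but then $g^2(2)=-21/5\notin\mathbb{Z}$. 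In every case some point of $\alpha$ is mapped outside $\mathbb{Z}$, contradicting $g(\alpha)\subseteq\alpha\subseteq\mathbb{Z}$, and hence $\mathcal{A}_\alpha=\mathbb{Z}_2$ as claimed.
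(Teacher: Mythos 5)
Your argument is correct, but it takes a genuinely different route from the paper's at almost every step after the trivial inclusion. The paper rules out the polyhedral groups by noting that $\alpha$ is concyclic, rules out $\mathbb{Z}_{2p}$ ($p\ge2$) by observing that a generator would have to be $z\mapsto e^{n'\pi i/p}z$ and hence cannot preserve $\alpha$, and rules out $D_q$ by a cyclic-order argument: the rotation subgroup acts freely on the circle $\widehat{\mathbb{R}}\supseteq\alpha$, so $q\mid 2k+1$, and the power of the rotation shifting $\alpha$ by $(2k+1)/q$ positions must send $0,-1,-2$ to $(2k+1)/q,\,-1+(2k+1)/q,\,-2+(2k+1)/q$, hence coincide with the parabolic $z\mapsto z+(2k+1)/q$ --- a contradiction. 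You instead reduce to $\mathrm{PGL}(2,\mathbb{R})$ at the outset (your stated reason, ``a M\"obius transformation is determined by three points,'' should really be ``a circle is determined by three points'': $f(\widehat{\mathbb{R}})$ is a circle meeting $\widehat{\mathbb{R}}$ in at least five points, hence equals it --- the conclusion stands), obtain the cyclic-or-dihedral dichotomy and the involutivity of negative-determinant elements by linear algebra (this absorbs the paper's separate $\mathbb{Z}_{2p}$ case), and then kill the dihedral case arithmetically: rationality of $\mathrm{tr}^2/\det$ plus Niven's theorem gives the crystallographic restriction $q\in\{1,2,3,4,6\}$, oddness forces $q=3$, and $(m-3)\mid 12$ leaves a finite list that you check directly. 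I verified the five cases you omit: for $m=4,5,6,7,15$ the orbit of $1$ hits $-12/7$, $-5/2$, $-10/3$, $-21/5$, $-35/3$ respectively, so all eight possibilities indeed fail. The paper's route is shorter and uniform in $q$; yours is more computational but self-contained and isolates a reusable principle (a finite-order element of $\mathrm{PGL}(2,\mathbb{Q})$ has order $1,2,3,4$ or $6$). One step worth making explicit: your normalization $m=g(0)>0$ is legitimate because $g^{-1}(0)=\tau g\tau^{-1}(0)=-g(0)$ (with $\tau(z)=-z$), so the orbit of $0$ is $\{0,m,-m\}$ and one of the two generators of $\langle g\rangle$ does send $0$ to a positive integer, as your formula $g(z)=m(z+m)/(m-3z)$ requires.
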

\begin{proof}
$\mathcal{A}_{\alpha}$ is not isomorphic to $A_5$, $S_4$ or $A_4$ as all points of $\alpha$ are concyclic.

Assume that
$$
\mathcal{A}_{\alpha}=\langle\rho\rangle\simeq\mathbb Z_{2p}
$$
for some $p\ge2$. Then $\rho$ is an elliptic transformation such that $\rho^p(z)=-z$. Thus
$$
\rho(z)=ze^{\frac{n'}{2p}2\pi i},\:(2p, n')=1.
$$
Thus $\rho$ can never fix $\alpha$ and the assumption is wrong.

Assume that
$$
\mathcal{A}_{\alpha}
=\langle \rho,\pi|(\rho)^{n}=(\pi)^2=(\rho\pi)^2=e\rangle
\simeq D_{n}
$$
for some $n\ge 2$. Notice that all points of $\alpha$ are concyclic. Then from Theorem \ref{tD_n} we know that $n|2k+1$ (thus $n\ge3$), and there exists some $\tilde\rho=\rho^m$ such that
$$
\tilde\rho(0)=\frac{2k+1}{n},\:
\tilde\rho(-1)=-1+\frac{2k+1}{n},\:
\tilde\rho(-2)=-2+\frac{2k+1}{n}\le k.
$$
We conclude that
$$
\tilde\rho(z)=z+\frac{2k+1}{n}.
$$
Thus $\tilde\rho$ can never fix $\alpha$ and the assumption is wrong again.
\end{proof}

Thus we have found finite subsets whose stabilizers are isomorphic to $\mathbb Z_2$ with cyclic-$2$ component index $(1, k)$ for each $k\ge2$. By the same method it is easy to find subsets whose stabilizers isomorphic to $\mathbb Z_2$ with cyclic-$2$ component index $(0, k)$ and $(2, k)$ for $k\ge3$ (consider $\{\pm\frac{1}{2}, \pm\frac{3}{2}, \cdots \pm\frac{2k-1}{2}, \}$ and $\{0, \infty, \pm1, \pm2, \cdots \pm k\}$). Notice that when the cyclic-$2$ component index is $(0, 1)$, $(0, 2)$, $(1, 1)$, $(2, 1)$ or $(2, 2)$, $\mathcal{A}_{\alpha}$ is not isomorphic to $\mathbb Z_2$. Now the proof of Theorem \ref{cZ_n} is completed.

\subsection{Trivial Group}
\label{trivial}

Suppose that $\mathcal{A}_{\alpha}$ is the trivial group, $\alpha$ finite, what could $|\alpha|$ be? When $|\alpha|=2$, $\mathcal{A}_{\alpha}$ is infinite. When $|\alpha|=3$, $\mathcal{A}_{\alpha}$ is isomorphic to $D_3$. When $|\alpha|=4$, suppose that $\alpha=\{z_1, z_2, z_3, z_4\}$. It is easy to check that
$$
[z_1, z_2, z_3, z_4]=[z_2, z_1, z_4, z_3]=
[z_3, z_4, z_1, z_2]=[z_4, z_3, z_2, z_1].
$$
Thus $\mathcal{A}_{\alpha}$ can not be trivial.
\begin{prop}
The finite subset
$$
\alpha=C_n(1)\cup\{2\},
$$
where $C_n(z)=\{ze^{\frac{k}{n}2\pi i}|k\in\mathbb{Z}\}$
has a trivial stabilizing group when $n\ge4$.
\end{prop}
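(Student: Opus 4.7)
The plan is to show $\mathcal{A}_{\alpha}$ is trivial in two stages: first reduce to the cyclic group of elliptic M\"obius rotations with fixed points $\{2,\,1/2\}$ preserving the unit circle; then rule out every non-identity element of this cyclic group by an explicit angular estimate.

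\emph{Stage 1 (reduction to cyclic).} Let $f\in\mathcal{A}_{\alpha}$. Since $2$ is the only point of $\alpha$ off the unit circle and $f$ is a bijection of $\alpha$, at least $n-1\geq 3$ of the points of $f(C_n(1))$ lie on the unit circle. Three points determine a circle, so $f$ preserves the unit circle; consequently $f(2)$ is off the unit circle, forcing $f(2)=2$. The reflection identity $f(1/\bar z)=1/\overline{f(z)}$ for M\"obius transformations preserving the unit circle then yields $f(1/2)=1/2$. Hence $\mathcal{A}_{\alpha}$ lies inside the compact one-parameter group of elliptic M\"obius rotations fixing $\{2,\,1/2\}$ and preserving the unit circle, so it is cyclic.

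\emph{Stage 2 (triviality).} Conjugate by $T(z)=(z-1/2)/(z-2)$, which sends $1/2$, $2$, and the unit circle to $0$, $\infty$, and $\{|w|=1/2\}$ respectively. A generator of $\mathcal{A}_{\alpha}$ of order $m$ becomes the rotation $R_{2\pi/m}\colon w\mapsto e^{2\pi i/m}w$, which must permute $T(C_n(1))\subseteq\{|w|=1/2\}$. A short computation yields
\[
\arg T(e^{i\phi})=\pi+2\arctan\bigl(3\tan(\phi/2)\bigr),\qquad
\frac{d}{d\phi}\arg T(e^{i\phi})=\frac{3\bigl(1+\tan^2(\phi/2)\bigr)}{1+9\tan^2(\phi/2)},
\]
and this derivative is strictly decreasing on $(0,\pi)$ from $3$ to $1/3$ and symmetric about $\phi=\pi$. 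Consequently the cyclic sequence $(g_0,\dots,g_{n-1})$ of consecutive angular gaps between the points of $T(C_n(1))$ is palindromic ($g_k=g_{n-1-k}$) and strictly unimodal, so its maximum value is attained exactly at the two indices $\{0,n-1\}$. A non-trivial $R_{2\pi/m}$ would cyclically shift this gap sequence by $d=n/m\in\{1,\dots,\lfloor n/2\rfloor\}$ and therefore send the maximal-index set $\{0,n-1\}$ to $\{d,d-1\}\pmod n$; for $n\geq 4$ no such $d$ gives $\{0,n-1\}$, so $m=1$ and $\mathcal{A}_{\alpha}$ is trivial.

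The main obstacle is Stage 2: one must quantify the conformal distortion $T$ imposes on equally spaced points of the unit circle, precisely enough to rule out every non-trivial rotational symmetry of the image $T(C_n(1))$. Once the monotonicity of the angular derivative is established, the combinatorial observation about palindromic unimodal sequences finishes the argument; Stage 1 is essentially forced by the distinguished role of $2$ as the unique off-circle point of $\alpha$.
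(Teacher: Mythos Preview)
Your proof is correct, but it takes a considerably longer route than the paper's. Both arguments begin identically: from $|f(C_n(1))\cap C_n(1)|\ge n-1\ge 3$ one gets that $f$ preserves the unit circle, hence $f(2)=2$ and $f$ permutes $C_n(1)$. At this point the paper simply invokes the fact (contained in its Theorem~\ref{tD_n}) that the stabilizer of the $n$-th roots of unity is $\langle z\mapsto e^{2\pi i/n}z,\ z\mapsto 1/z\rangle\simeq D_n$, and observes that no non-identity element of this dihedral group fixes $2$. Your Stage~2 instead re-derives, from scratch and for this particular configuration, a rigidity statement that the paper already has on the shelf: you pass to the elliptic pencil through $2$ and $1/2$, conjugate to Euclidean rotations, compute the angular distortion $\arg T(e^{i\phi})=\pi+2\arctan(3\tan(\phi/2))$, and use monotonicity of the derivative to show the gap sequence of $T(C_n(1))$ has a unique pair of maximal gaps, incompatible with any non-trivial cyclic shift. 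This is a valid and fully self-contained argument---it avoids appealing to the classification of stabilizers of $C_n(1)$---but the paper's two-line finish is far more economical once that classification is available.
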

\begin{proof}
For any $f\in\mathcal{A}_{\alpha}$,
$$
|f(C_n(1))\cap C_n(1)|\ge n-1\ge3.
$$
Thus $f$ fixes the unit circle, which implies that $f$ also fixes $C_n(1)$. From Theorem \ref{tD_n} we know that
$$
f\in\langle z\mapsto e^{\frac{1}{n}2\pi i}z,\:
            z\mapsto \frac{1}{z}              \rangle.
$$
However $f$ also fixes $2$. Thus $f$ has to be the identity.
\end{proof}
Thus we have proved Theorem \ref{ctrivial}.

\section{The Code of the Algorithm}
\label{code}

\begin{verbatim}
#include <stdio.h>

int main()
{int n, k, l, r, p;
printf("Please enter the size of the subset.\n");
scanf("%d", &n);

if(n<1){printf("error");}
if(n==1){printf("infinity");}
if(n==2){printf("infinity");}


k=n/60; r=n-60*k;
//printf("k=%d, r= %d\n",k,r);
if(r==0&k>=1){printf("A_5, (0, 0, 0, %d)\n",k);}
if(r==12){printf("A_5, (1, 0, 0, %d)\n",k);}
if(r==20){printf("A_5, (0, 1, 0, %d)\n",k);}
if(r==30){printf("A_5, (0, 0, 1, %d)\n",k);}
if(r==32){printf("A_5, (1, 1, 0, %d)\n",k);}
if(r==42){printf("A_5, (1, 0, 1, %d)\n",k);}
if(r==50){printf("A_5, (0, 1, 1, %d)\n",k);}
if(r==2&k>=1){printf("A_5, (1, 1, 1, %d)\n",k-1);}

k=n/24; r=n-24*k;
//printf("k=%d, r= %d\n",k,r);
if(r==0&k>=1){printf("S_4, (0, 0, 0, %d)\n",k);}
if(r==6){printf("S_4, (1, 0, 0, %d)\n",k);}
if(r==8){printf("S_4, (0, 1, 0, %d)\n",k);}
if(r==12){printf("S_4, (0, 0, 1, %d)\n",k);}
if(r==14){printf("S_4, (1, 1, 0, %d)\n",k);}
if(r==18){printf("S_4, (1, 0, 1, %d)\n",k);}
if(r==20){printf("S_4, (0, 1, 1, %d)\n",k);}
if(r==2&k>=1){printf("S_4, (1, 1, 1, %d)\n",k-1);}

k=n/12, r=n-12*k;
//printf("k=%d, r= %d\n",k,r);
if(r==0&k>=1){printf("A_4, (0, 0, %d)\n",k);}
if(r==4){printf("A_4, (1, 0, %d)\n",k);}
if(r==8&k>=1){printf("A_4, (2, 0, %d)\n",k);}
if(r==6&k>=1){printf("A_4, (0, 1, %d)\n",k);}
if(r==10){printf("A_4, (1, 1, %d)\n",k);}
if(r==2&k>=2){printf("A_4, (2, 1, %d)\n",k-1);}

p=n;
while(p>=3)
{
k=n/(2*p); l=n/p-2*k; r=n-2*p*k-p*l;
//printf("p=%d, k=%d, l=%d, r= %d\n",p,k,l,r);
if(k>=1&r==0){printf("D_%d, (0, %d, %d)\n",p,l,k);}
if(k>=2&r==0&l==0){printf("D_%d, (0, 2, %d)\n",p,k-1);}
if(k>=1&r==2){printf("D_%d, (1, %d, %d)\n",p,l,k);}
if(k>=2&r==2&l==0){printf("D_%d, (1, 2, %d)\n",p,k-1);}
if(k==0&r==0&l==1){printf("D_%d, (0, 1, 0)\n",p);}
if(k==0&r==2&l==1&p!=4){printf("D_%d, (1, 1, 0)\n",p);}
p=p-1;
}

k=n/4; r=n-4*k;
//printf("k=%d, r= %d\n",k,r);
if(k>=1&r==0){printf("K_4, (0, %d)\n",k);}
if(k>=2&r==0){printf("K_4, (2, %d)\n",k-1);}
if(k>=1&r==2){printf("K_4, (1, %d)\n",k);}
if(k>=2&r==2){printf("K_4, (3, %d)\n",k-1);}

p=n;
while(p>=3)
{
k=n/p; r=n-p*k;
//printf("p=%d, k=%d, r= %d\n",p,k,r);
if(k>=3&r<=2){printf("Z_%d, (%d, %d)\n",p,r,k);}
if(k==2&r==1){printf("Z_%d, (%d, %d)\n",p,r,k);}
if(k==1&r==1&p!=3){printf("Z_%d, (%d, %d)\n",p,r,k);}
p=p-1;
}

k=n/2; r=n-2*k;
//printf("k=%d, r= %d\n",k,r);
if(k>=3){printf("Z_2, (%d, %d)\n",r,k);}
if(k>=4&r==0){printf("Z_2, (2, %d)\n",k-1);}
if(k==2&r==1){printf("Z_2, (%d, %d)\n",r,k);}

if(n>=5){printf("(0)");}

return 0; }
\end{verbatim}

\begin{examp}
The classification of the obifold singularities of the moduli space $\mathfrak{M_{0,\:2018}}$ is
\begin{verbatim}
S_4, (1, 1, 1, 83)
A_4, (2, 1, 167)
D_2018, (0, 1, 0)
D_2016, (1, 1, 0)
D_1009, (0, 0, 1)
D_1008, (1, 0, 1)
D_672, (1, 1, 1)
D_504, (1, 0, 2)
D_504, (1, 2, 1)
D_336, (1, 0, 3)
D_336, (1, 2, 2)
D_288, (1, 1, 3)
D_252, (1, 0, 4)
D_252, (1, 2, 3)
D_224, (1, 1, 4)
D_168, (1, 0, 6)
D_168, (1, 2, 5)
D_144, (1, 0, 7)
D_144, (1, 2, 6)
D_126, (1, 0, 8)
D_126, (1, 2, 7)
D_112, (1, 0, 9)
D_112, (1, 2, 8)
D_96, (1, 1, 10)
D_84, (1, 0, 12)
D_84, (1, 2, 11)
D_72, (1, 0, 14)
D_72, (1, 2, 13)
D_63, (1, 0, 16)
D_63, (1, 2, 15)
D_56, (1, 0, 18)
D_56, (1, 2, 17)
D_48, (1, 0, 21)
D_48, (1, 2, 20)
D_42, (1, 0, 24)
D_42, (1, 2, 23)
D_36, (1, 0, 28)
D_36, (1, 2, 27)
D_32, (1, 1, 31)
D_28, (1, 0, 36)
D_28, (1, 2, 35)
D_24, (1, 0, 42)
D_24, (1, 2, 41)
D_21, (1, 0, 48)
D_21, (1, 2, 47)
D_18, (1, 0, 56)
D_18, (1, 2, 55)
D_16, (1, 0, 63)
D_16, (1, 2, 62)
D_14, (1, 0, 72)
D_14, (1, 2, 71)
D_12, (1, 0, 84)
D_12, (1, 2, 83)
D_9, (1, 0, 112)
D_9, (1, 2, 111)
D_8, (1, 0, 126)
D_8, (1, 2, 125)
D_7, (1, 0, 144)
D_7, (1, 2, 143)
D_6, (1, 0, 168)
D_6, (1, 2, 167)
D_4, (1, 0, 252)
D_4, (1, 2, 251)
D_3, (1, 0, 336)
D_3, (1, 2, 335)
K_4, (1, 504)
K_4, (3, 503)
Z_2017, (1, 1)
Z_672, (2, 3)
Z_504, (2, 4)
Z_336, (2, 6)
Z_288, (2, 7)
Z_252, (2, 8)
Z_224, (2, 9)
Z_168, (2, 12)
Z_144, (2, 14)
Z_126, (2, 16)
Z_112, (2, 18)
Z_96, (2, 21)
Z_84, (2, 24)
Z_72, (2, 28)
Z_63, (2, 32)
Z_56, (2, 36)
Z_48, (2, 42)
Z_42, (2, 48)
Z_36, (2, 56)
Z_32, (2, 63)
Z_28, (2, 72)
Z_24, (2, 84)
Z_21, (2, 96)
Z_18, (2, 112)
Z_16, (2, 126)
Z_14, (2, 144)
Z_12, (2, 168)
Z_9, (2, 224)
Z_8, (2, 252)
Z_7, (2, 288)
Z_6, (2, 336)
Z_4, (2, 504)
Z_3, (2, 672)
Z_2, (0, 1009)
Z_2, (2, 1008)
(0)
\end{verbatim}
\end{examp}

\bibliographystyle{plain}

\small{\noindent YUE WU\\
SCHOOL OF MATHEMATICAL SCIENCES\\
UNIVERSITY OF SCIENCE AND TECHNOLOGY OF CHINA\\
HEFEI 230026 CHINA\\
wuyuee15@mail.ustc.edu.cn}\\

\small{\noindent BIN XU\\
WU WEN-TSUN KEY LABORATORY OF MATH, USTC, CHINESE ACADEMY OF SCIENCE\\
SCHOOL OF MATHEMATICAL SCIENCES\\
UNIVERSITY OF SCIENCE AND TECHNOLOGY OF CHINA\\
HEFEI 230026 CHINA\\
bxu@ustc.edu.cn}

\end{document}